\title[Geometric Aspects of $C^*$-Extreme Points]{Geometric Aspects of $C^*$-Extreme Points}
 \author[N. Hotwani]{Neha Hotwani${}^1$}
\address{Department of Mathematics\\
Shiv Nadar Institution of Eminence. Gautam Buddha
Nagar-201314, India}
\email{nehahotwani19@gmail.com}
\author[T.S.S.R.K. Rao]{T.S.S.R.K. Rao${}^2$}
\address{Department of Mathematics\\
Shiv Nadar Institution of Eminence. Gautam Buddha
Nagar-201314, India}
\email{srin@fulbrightmail.org}
\subjclass[2020]{47L07 (Primary);  46L10 (Secondary)}
\keywords{Linear extreme,  Strongly extreme, Wold decomposition, Weak$^*$-extreme,  $C^*$-convexity, $C^*$-extreme.}
      \newtheorem{theorem}{Theorem}[section]
      \newtheorem{remark}[theorem]{Remark}
       \newtheorem{corollary}{Corollary}
      \newtheorem{lemma}[theorem]{Lemma}
      \newtheorem{proposition}[theorem]{Proposition}
      \def\N{{\mathbb N}}
      \def\cN{\mathcal C}
      \def\cI{\mathcal I}
      \def\cM{\mathcal M}
       \def\cN{\mathcal N}
      \def\cR{\mathcal R}
      \def\cA{\mathcal A}
      \def\cB{\mathcal B}
      \def\cC{\mathcal C}
      \def\bb1{\mathbb 1}
      \def\b1{\bold{1}}
\renewcommand{\thesection}{\arabic{section}}
\newcommand{\range}{\operatorname{Range}}
\newcommand{\dist}{\operatorname{dist}}
\date{}
\newtheorem{letterthm}{Theorem}
\numberwithin{equation}{section}
\begin{document}

\begin{abstract}
We provide a characterization of the $C^*$-extreme points of the closed unit ball of a von Neumann algebra and demonstrate that $C^*$-extremality is equivalent to both linear extremality and strong extremality. As an application, we characterize certain classes of von Neumann algebras in terms of their $C^*$-extreme points.
\end{abstract}
\maketitle

\section{Introduction}
\label{sec:Intro}
In the study of operator algebras, the geometry of the unit ball plays a central role in understanding the structure of the underlying algebra. One way to study this geometric structure is by looking at extreme points - elements that cannot be expressed as nontrivial convex combinations of others. See the classical work by \cite{K,AS,P}, where the extremal ideas have been used to classify the structures of $C^*$-algebras, function spaces, etc.

In Banach space theory, for a given Banach space $X$ (that we consider as a subspace of its bidual $X^{**}$ via the canonical embedding),
a linear extreme point of the closed unit ball $X_1$ 
that remain linear extreme in the bidual $X_1^{**}$ is known as \textbf{weak$^{*}$-extreme point}. 
Another well-known, stronger notion of a linear extreme point is that of a strongly extreme point. Recall that $x\in X_1$, is called \textbf{strongly extreme point} if  for any sequences $\{x_n\}$ and $\{y_n\}$ in $X_1$,  $\frac{x_n+y_n}{2} \to x$ implies $x_n-y_n \to 0$. 
For more details, see \cite{DHS}. Note that if $x\in X_1$ is a strongly extreme point, and $x\in Y\subset X$ is a closed subspace, then $x$ is a strongly extreme point of $Y_1$.
It is known that strongly extreme points remain strongly extreme in bidual. It is also known that a weak$^{*}$-extreme point need not be weak$^{*}$-extreme in the bidual, see \cite{SR}. It is interesting to 
compare similar extremal behaviour in the context of 
their non-commutative analogue, called $C^*$-extreme points, in the $C^*$-algebra setup.  As important as classical convexity, the notion of $C^*$-convexity has gathered significant attention recently, for instance, see \cite{HMP, FM,M, Ma, Ma3, Ma4}. 
Recall   that (\cite{LP}) for a unital $C^*$-algebra $\cA$ with identity $\bold{1}$,
 an element $x \in \cA_1$ is said to be a \textbf{$C^*$-convex combination} of $k$ elements $x_1,\dots, x_k\in \cA_1$, if there exist $t_1, \dots, t_k\in \cA$ such that $\sum_{i=1}^kt_i^*t_i=\bold{1}$ and $x=\sum_{i=1}^kt_i^*x_it_i$. The $t_i$'s are known as the \textbf{coefficients} of this $C^*$- convex combination. If the coefficients, i.e., the $t_i$'s are  invertible, then this $C^*$-convex combination is called \textbf{a proper $C^*$- convex combination}. 
 $x\in \cA_1$ is said to be a \textbf{$C^*$-extreme point} of $\cA_1$ if, whenever $x$ can be written as a proper $C^*$-convex combination of $x_1, \dots, x_k \in \cA_1$, that is,
\[
x= \sum_{i=1}^kt_i^*x_it_i,
\]
where $t_1,\dots, t_k\in \cA$ are invertible with $\sum_{i=1}^kt_i^*t_i=\bold{1}$, then each $x_i$ is unitarily equivalent to $x$, i.e., there exist unitaries $u_1, \dots, u_k\in \cA$ such that $x_i=u_i^*xu_i$ for $i=1, \dots, k$.

The closed unit ball $\cA_1$ is $C^*$-convex subset of   $\cA$. For completeness, we include a proof of this well-known fact.

\textbf{Fact:} 
    $\cA_1$ 
   is $C^*$-convex. 
   Let $t_1, \dots, t_k\in \cA$ such that $\sum_{i=1}^kt_i^*t_i=\bold{1}$ and $x_1, \dots, x_k\in \cA_1$. We need to show that $x=\sum_{i=1}^kt_i^*x_it_i\in \cA_1$. Since we know that any $C^*$-algebra can be embedded in $B(H)$, the space of all bounded linear operators on some Hilbert space $H$, it follows that $\cA_1 \subset B(H)_1$. Now, from \cite[Example~3]{LP}, we conclude that $\|x\|\leq 1$. 
    Thus, $\cA_1$ is $C^*$-convex in $\cA$.
   
When ${\mathcal A}$ is an infinite-dimensional $C^\ast$-algebra, we do not know if ${\mathcal A}_1$ is always a $C^\ast$-convex set in ${\mathcal A}^{\ast\ast}$.

Throughout this article, we only consider unital $C^*$-algebras, and we denote such $C^*$-algebras by  $\cA, \cB, \cC$ and von Neumann algebras by $\cM, \cN, \cR$.
The notation $\cA^{**}$ denote the bidual of the $C^*$-algebra $\cA$.
It is well-known that $\mathcal{A}^{**}$ is a $C^*$-algebra (in fact, a von Neumann algebra), and that the canonical embedding $\mathcal{A} \hookrightarrow \mathcal{A}^{**}$ is a $C^*$-algebra homomorphism. 
Similar to our earlier discussion, we are now interested in whether $C^*$-extremality passes from $\cA_1$ to $\cA_1^{**}$.
In this direction, we have succeeded in showing that a linear extreme point of $\cA_1$ is, in fact, a $C^*$-extreme point of $\cA_1^{**}$ (Corollary~~\ref{c:bidual}) . 


This article is organized as follows. In Section \ref{s:ext},
we show that in  a $C^*$-algebra, linear extreme points are weak${^*}$-extreme points. We also show that extremality is preserved under passage to the quotient map via a closed two-sided ideal. 
 Section \ref{sec:C*-ext}
 contains a proof of a Wold decomposition-type theorem in the setting of von Neumann algebras and conclude with the observation that any two similar isometries in a von Neumann algebra are necessarily unitarily equivalent (Theorem~~ \ref{t:siml imp unit equiv}). 

In Section \ref{sec:main result}, we use Theorem \ref{t:siml imp unit equiv} and comparison theorem (\cite[Theorem ~6.2.7]{KR2}) to characterize the $C^*$-extreme points of $\cM_1$. 
One of our main results in this article along
these lines is the following.

\begin{letterthm}
\label{t:A}
Let  $x\in \cM_1$. Then $x$ is a $C^*$-extreme point of $\cM_1$ if and only if 
  there exist central projections $p_1, p_2, p_3 \in \cM$ such that
  $p_1+p_2+p_3=\bold{1}$,
  and the following holds:
  \begin{enumerate} [(i)]
      \item \label{i1:u} either $p_1=0$, or $p_1x$ is a unitary in $p_1\cM$,
      \item \label{i2:iso} either $p_2=0$, or $p_2x$ is a non-unitary  isometry in $p_2\cM$, and,
      \item \label{i3:coiso} either $p_3=0$, or $p_3x$ is a non-unitary coisometry in $p_3\cM$.
  \end{enumerate}
\end{letterthm}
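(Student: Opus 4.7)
The plan is to prove both directions through an analysis of the unital completely positive map $\Phi:\cM^k\to\cM$, $\Phi((y_i))=\sum_{i=1}^k t_i^*y_it_i$, associated with any proper $C^*$-convex combination $x=\sum t_i^*x_it_i$, combined with the comparison theorem for projections in $\cM$ and Theorem~\ref{t:siml imp unit equiv} on similar isometries.

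For sufficiency, suppose $x=p_1x+p_2x+p_3x$ as in the statement and that $x=\sum_{i=1}^k t_i^*x_it_i$ is a proper $C^*$-convex combination. Since each $p_j$ is central, $p_jx=\sum_i(p_jt_i)^*(p_jx_i)(p_jt_i)$ is a proper $C^*$-convex combination inside the corner $p_j\cM$ with invertible coefficients summing to $p_j$. This reduces the task to showing that a unitary, a non-unitary isometry, and a non-unitary coisometry in a von Neumann algebra are each $C^*$-extreme. In the isometry case $x^*x=\bold{1}$, Kadison--Schwarz gives
$$\sum_i t_i^*x_i^*x_it_i=\Phi((x_i)^*(x_i))\ge\Phi((x_i))^*\Phi((x_i))=x^*x=\bold{1}=\sum_i t_i^*t_i,$$
and since each summand $t_i^*(x_i^*x_i-\bold{1})t_i\le 0$, invertibility of $t_i$ forces $x_i^*x_i=\bold{1}$ for every $i$. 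The equality case of Kadison--Schwarz (Choi's theorem on the multiplicative domain) then yields $\Phi(b^*(x_i))=\Phi(b)^*x$ for every $b\in\cM^k$; choosing $b$ supported in the $j$th coordinate and using invertibility of $t_j$ gives $x_jt_j=t_jx$, so $x_j=t_jxt_j^{-1}$ is similar to $x$. Theorem~\ref{t:siml imp unit equiv} then promotes this similarity to unitary equivalence. The coisometry case is symmetric via the $yy^*$ form of Kadison--Schwarz, and the unitary case follows from either.

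For necessity, assume $x$ is $C^*$-extreme. The key claim is that $C^*$-extremality forces Kadison's condition $(\bold{1}-x^*x)\cM(\bold{1}-xx^*)=\{0\}$, which in $\cM$ is equivalent to orthogonality of the central supports $z_1=c(\bold{1}-x^*x)$ and $z_2=c(\bold{1}-xx^*)$. Granted this, set $p_1=\bold{1}-z_1-z_2$, $p_2=z_2$, $p_3=z_1$: on $p_1$ both defects vanish so $p_1x$ is unitary in $p_1\cM$; on $p_2\le\bold{1}-z_1$ the initial defect $p_2(\bold{1}-x^*x)$ vanishes so $p_2x$ is an isometry, and it is non-unitary because $p_2(\bold{1}-xx^*)\ne 0$ by the choice of $z_2$; the analysis on $p_3$ is symmetric. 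To establish the claim, assume for contradiction that $(\bold{1}-x^*x)\cM(\bold{1}-xx^*)\ne\{0\}$, so there is a central summand on which both defects survive; the comparison theorem then produces a partial isometry relating their supports, which is used to manufacture a proper $C^*$-convex combination exhibiting $x$ as a sum of operators not all unitarily equivalent to $x$, contradicting $C^*$-extremality.

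The main obstacle is this last step in the necessity direction: fabricating, from a nonzero element of $(\bold{1}-x^*x)\cM(\bold{1}-xx^*)$, a concrete proper $C^*$-convex combination whose summands are not unitarily equivalent to $x$. The comparison theorem is essential here for producing, on any putative central summand where both defects survive, a partial isometry between their supports that can be used to break the unitary equivalence required by $C^*$-extremality. Once Kadison's condition is secured, the central decomposition and the sufficiency argument close the loop.
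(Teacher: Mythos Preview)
Your sufficiency argument is correct and essentially matches the paper's Theorem~\ref{t:iso-imp-C*ext-in-vN}: the Kadison--Schwarz/multiplicative-domain packaging is a clean rephrasing of the direct computation there, both arriving at $x_jt_j=t_jx$ and then invoking Theorem~\ref{t:siml imp unit equiv}. Your extraction of $p_1,p_2,p_3$ from the orthogonality of the central covers $z_1,z_2$ is also fine and coincides with the $(ii)\Rightarrow(i)$ argument the paper gives later in Theorem~\ref{t:B}.

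The necessity direction, however, has a genuine gap exactly where you flag it. You propose to prove $C^*$-extreme $\Rightarrow (\bold{1}-x^*x)\cM(\bold{1}-xx^*)=\{0\}$ by contradiction, but the step ``manufacture a proper $C^*$-convex combination whose summands are not all unitarily equivalent to $x$'' is never carried out, and it is not clear how to do it: unitary equivalence is a coarse invariant (in a properly infinite factor many partial isometries with distinct initial and final projections are nonetheless unitarily equivalent), so a comparison partial isometry between the defect supports does not by itself rule it out. The paper avoids this obstacle by reversing the logic. Instead of arguing by contradiction, it constructs for \emph{every} contraction $x$ an explicit average $x=\tfrac{1}{2}(x_1+x_2)$ (Lemma~\ref{lem:cont-imp-ave-of-p.i} and Theorem~\ref{t:comp-thm-on-contr}): write $x=v|x|$, split $|x|=\tfrac{1}{2}(u_1+u_2)$ with $u_i$ unitary, and add/subtract a partial isometry $w$ coming from the comparison theorem applied to the projections onto $\ker(x)$ and $\range(x)^\perp$, so that each $x_i=vu_i\pm w$ already satisfies (i)--(iii) on the resulting central summands $p_1,p_2,p_3$. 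Then $C^*$-extremality gives a unitary $u$ with $x=u^*x_1u$, and since the $p_j$ are central, $p_jx=(p_ju)^*(p_jx_1)(p_ju)$ inherits from $p_jx_1$ the property of being unitary, isometry, or coisometry in $p_j\cM$. No detour through Kadison's condition is needed; in the paper that implication is derived \emph{afterwards}, in Theorem~\ref{t:B}, as a consequence of Theorem~\ref{t:A}.
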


A proof of Theorem \ref{t:A} is given in Section \ref{sec:main result}.
As a consequence of Theorem \ref{t:A}, we characterize certain von Neumann algebras in terms of the  $C^*$-extreme points of $\cM_1$ (Theorem~~\ref{t:C*ext-in-subclass}).
As a further consequence of Theorem \ref{t:A}, 
we establish the following result, another main theorem of this article, which states that the three notions - $C^{*}$-extreme points, linear extreme points, and strongly extreme points - coincide for $\mathcal M_{1}$. Consequently, these points remain $C^{*}$-extreme in all even order duals of $\mathcal M$.

The motivation for the theorem below comes from \cite[Theorem~2.12]{R}, where the author proved an analogous result in the context of commutative algebras without involution.
 \begin{letterthm}
 \label{t:B}
     Let $x\in \cM_1$. The following are equivalent.
     \begin{enumerate} [(i)]
         \item  $x$ is a $C^*$-extreme point of $\cM_1$.
         \item $x$ is a linear extreme point of $\cM_1$.
         \item $x$ is a strongly extreme points of $\cM_1$.
     \end{enumerate}
      In particular,  any linear extreme point of the closed unit ball of a  $C^\ast$-algebra is a strongly extreme point.
      \end{letterthm}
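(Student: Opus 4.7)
My plan is to prove Theorem B by the cyclic chain (iii) $\Rightarrow$ (ii) $\Rightarrow$ (i) $\Rightarrow$ (iii), with Theorem A as the structural input. The implication (iii) $\Rightarrow$ (ii) is a Banach-space generality: inserting constant sequences $x_n = x+h$, $y_n = x-h$ (for any $h$ with $x \pm h \in \mathcal{M}_1$) into the strong-extremality definition forces $h=0$.

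For (ii) $\Rightarrow$ (i) I would apply Kadison's characterization --- $x \in \mathcal{M}_1$ is linearly extreme iff $(\mathbf{1}-xx^*)\mathcal{M}(\mathbf{1}-x^*x) = \{0\}$ --- and translate it into the central-projection language of Theorem A. In a von Neumann algebra, the vanishing of this product forces the central supports $z_1$ of $\mathbf{1}-x^*x$ and $z_2$ of $\mathbf{1}-xx^*$ to satisfy $z_1 z_2 = 0$. Setting $p_1 := \mathbf{1} - z_1 - z_2$, $p_2 := z_2$, $p_3 := z_1$ yields a partition of $\mathbf{1}$ into central projections; a direct check using $p_1 \leq \mathbf{1} - z_i$ gives $p_1 x^*x = p_1 = p_1 xx^*$ so $p_1 x$ is unitary in $p_1 \mathcal{M}$, while the defining property of the central support forces $p_2 x$ to be a non-unitary isometry in $p_2 \mathcal{M}$ and $p_3 x$ a non-unitary coisometry in $p_3 \mathcal{M}$. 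Theorem A then delivers $C^*$-extremality.

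The substantive step is (i) $\Rightarrow$ (iii). Applying Theorem A, I decompose $\mathbf{1} = p_1 + p_2 + p_3$ with $p_i x$ unitary, non-unitary isometry, or non-unitary coisometry in $p_i \mathcal{M}$, prove strong extremality of each $p_i x$ in $(p_i\mathcal{M})_1$, and reassemble: if $\tfrac12(a_n+b_n)\to x$ in $\mathcal{M}_1$, multiplying by the central projections gives $\tfrac12(p_i a_n + p_i b_n)\to p_i x$ in $(p_i\mathcal{M})_1$, and $a_n - b_n = \sum_i p_i(a_n-b_n) \to 0$. The three corner cases all reduce to the single fact that an isometry in any unital $C^*$-algebra is strongly extreme (unitaries are a special case; coisometries follow by adjoints). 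For this I would first establish the base case that $\mathbf{1}$ is strongly extreme --- via the inequality $(a_n-\mathbf{1})^*(a_n-\mathbf{1}) \le 2\mathbf{1} - (a_n + a_n^*)$, whose right side tends to zero after averaging the hypothesis with its adjoint --- and then, for a general isometry $v$, use left multiplication by $v^*$ to deduce $v^*a_n \to \mathbf{1}$ from the base case, and control the kernel piece by
\[
((\mathbf{1}-vv^*)a_n)^*((\mathbf{1}-vv^*)a_n) = a_n^*a_n - (v^*a_n)^*(v^*a_n) \le \mathbf{1} - (v^*a_n)^*(v^*a_n) \to 0,
\]
yielding $a_n \to v$ and hence $a_n - b_n \to 0$.

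For the closing ``in particular'' clause on a general $C^*$-algebra $\mathcal{A}$: the Section~\ref{s:ext} result that linear extreme points of $\mathcal{A}_1$ are weak$^*$-extreme shows that any linearly extreme $x \in \mathcal{A}_1$ remains linearly extreme in the von Neumann algebra $\mathcal{A}^{**}$; the equivalence already established gives strong extremality in $\mathcal{A}_1^{**}$; this descends to the closed subspace $\mathcal{A}$ as noted in the introduction. I expect the main obstacle to be the kernel-piece estimate in the non-unitary isometry case --- recognising that the inequality $a_n^*a_n \leq \mathbf{1}$ is precisely what turns control of $v^*a_n$ into control of $(\mathbf{1}-vv^*)a_n$ --- but once this is set up, the rest is formal.
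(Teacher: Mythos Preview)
Your proposal is correct and tracks the paper's proof closely for (iii)$\Rightarrow$(ii), for (ii)$\Rightarrow$(i) (the paper uses exactly your central-support argument, with $p_3$ the central support of $\mathbf{1}-x^*x$ and $p_2$ that of $\mathbf{1}-xx^*$), and for the ``in particular'' clause (the paper likewise passes through $\mathcal{A}^{**}$ via Theorem~\ref{t:ext-in-bidual} and then descends by the subspace-hereditarity of strong extremality noted in the introduction).

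The one genuine divergence is in (i)$\Rightarrow$(iii). The paper does not compute directly in the $C^*$-algebra: it invokes Theorem~\ref{t:st-ext}, which shows that any isometry on a uniformly convex Banach space $X$ is strongly extreme in $B(X)_1$, specializes this to $X=H$ (Corollary~\ref{c:st.ext-vN}), and then reassembles over the central decomposition via Proposition~\ref{p:st.ext-in-d.sum}. Your route is instead a self-contained $C^*$-algebraic computation: first $(a_n-\mathbf{1})^*(a_n-\mathbf{1})\le 2\mathbf{1}-(a_n+a_n^*)\to 0$ to get the base case, then left-multiply by $v^*$ and control the defect $(\mathbf{1}-vv^*)a_n$ via $a_n^*a_n\le\mathbf{1}$. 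Your argument is more elementary and works intrinsically in any unital $C^*$-algebra without embedding in $B(H)$ or appealing to Banach-space geometry; the paper's approach, on the other hand, yields the more general by-product that isometries are strongly extreme in $B(X)_1$ for \emph{any} uniformly convex $X$, which is of independent interest even though only the Hilbert-space case is needed here.
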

     A proof of Theorem \ref{t:B} can be found in Section \ref{sec:main result}.




Another notion that we consider in this section is vector-valued continuous functions space. More precisely,
let $\Omega$ be a compact Hausdorff space and let $X$ be  a Banach space. We denote by $C(\Omega, X)$  the space of all $X$-valued continuous functions, equipped with the supremum norm. It was shown in \cite{DHS} that $f\in C(\Omega, X)_1$ is a strongly extreme point if and only if $f(\omega)$ is a strongly extreme point in $X_1$ for every $\omega \in \Omega$.
 Motivated by this work,
we consider the space $C(\Omega, \mathcal A)$, where $\Omega$ is a compact Hausdorff space and $\mathcal A$ is a $C^*$-algebra. It is well known that $C(\Omega, \mathcal A)$ is a unital $C^*$-algebra. Furthermore, if $\Omega$ is infinite and $\mathcal A$ is an infinite-dimensional von Neumann algebra $\mathcal M$, then $C(\Omega, \mathcal A)$ cannot be a von Neumann algebra.
 We are interested in whether the fact that $f\in C(\Omega, \cA)_1$ is a $C^*$-extreme point implies that
$f(\omega)$ is a $C^*$-extreme point of $\cA_1$ for every $\omega \in \Omega$. We are able to answer this question when the set of isolated points is dense in $\Omega$ and $\mathcal{A}$ is a von Neumann algebra $\mathcal{M}$.





\section{Linear Extreme Points}
\label{s:ext}
 In this section, we discuss some results on linear extreme points of the closed unit ball of a  $C^*$-algebra.
In a $C^*$-algebra, the following theorem is a well-known characterization of 
the linear extreme points of the closed unit ball.
\begin{theorem}
    \label{t:lin-ext}
    \cite[Theorem~10.2]{T}
    Let $\cA$ be a  $C^*$-algebra and $\cA_1$ denote the closed unit ball of $\cA$. Then $x$ is a linear extreme point of $\cA_1$ if and only if $x$ is a partial isometry, and it satisfies 
   \begin{equation}
       \label{eq:ext-cond}
      (\bold{1}-x^*x)\cA (\bold{1}-xx^*) =\{0\}.
   \end{equation}
\end{theorem}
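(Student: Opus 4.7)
The plan is to prove both implications by standard perturbation arguments built on the continuous functional calculus applied to $x^*x$.

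For the direction $(\Leftarrow)$, suppose $x$ is a partial isometry satisfying $(\bold{1}-x^*x)\cA(\bold{1}-xx^*)=\{0\}$, and write $x=(y+z)/2$ with $y,z\in\cA_1$. Put $p=x^*x$, $q=xx^*$ and $u:=y-x=x-z$. Averaging the four inequalities $(x\pm u)^*(x\pm u)\leq\bold{1}$ and $(x\pm u)(x\pm u)^*\leq\bold{1}$ gives $p+u^*u\leq\bold{1}$ and $q+uu^*\leq\bold{1}$; squeezing by $p$ yields $pu^*up\leq p(\bold{1}-p)p=0$, so $up=0$, and symmetrically $qu=0$. Hence $u=(\bold{1}-q)u(\bold{1}-p)$, which lies in $(\bold{1}-xx^*)\cA(\bold{1}-x^*x)=\{0\}$ (this set is the adjoint of the hypothesis and hence also vanishes). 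Thus $u=0$ and $y=z=x$.

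For $(\Rightarrow)$, I first show $a:=x^*x$ is a projection. If not, pick $\lambda\in\sigma(a)\cap(0,1)$ and a real-valued continuous $f\geq 0$ supported in a small interval around $\lambda$ bounded away from $0$ and $1$, with $f(\lambda)>0$ and small enough that $(1\pm f(t))^2 t\leq 1$ for all $t\in\sigma(a)$; such an $f$ exists because $\lambda<1$ leaves room above. A direct computation gives $(x\pm xf(a))^*(x\pm xf(a))=(1\pm f(a))^2 a$, so $\|x\pm xf(a)\|\leq 1$, and $x=\tfrac12[(x+xf(a))+(x-xf(a))]$ is a convex combination. By extremality, $xf(a)=0$, hence $af(a)^2=0$, contradicting $\lambda f(\lambda)^2>0$. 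Therefore $x^*x$ is a projection; a short calculation then yields $x-xx^*x=0$, making $x$ a partial isometry with $xx^*$ a projection as well.

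For the orthogonality condition, fix $c\in\cA$ with $\|c\|\leq 1$ and set $w:=\alpha(\bold{1}-q)c(\bold{1}-p)$ for a small scalar $\alpha>0$. Using $x=qxp$ (a consequence of $x$ being a partial isometry with $x^*x=p$, $xx^*=q$), one checks $x^*w=w^*x=xw^*=wx^*=0$; hence $(x\pm w)^*(x\pm w)=p+w^*w$ with $p\cdot w^*w=0$, so $\|x\pm w\|^2=\max(1,\|w\|^2)\leq 1$ once $|\alpha|$ is sufficiently small. Extremality applied to $x=\tfrac12[(x+w)+(x-w)]$ forces $w=0$, and since $c\in\cA$ and $\alpha>0$ were arbitrary, $(\bold{1}-xx^*)\cA(\bold{1}-x^*x)=\{0\}$; taking adjoints yields the desired identity. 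The main technical obstacle is the functional-calculus perturbation in the forward direction, where one must guarantee that both $(1+f(t))^2 t$ and $(1-f(t))^2 t$ stay at most $1$ throughout $\sigma(a)$; this is arranged by localising the support of $f$ to a neighborhood of an interior spectral point and shrinking its height, exploiting the strict inequality $\lambda<1$.
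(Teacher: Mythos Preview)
The paper does not give its own proof of this statement; it is quoted verbatim as \cite[Theorem~10.2]{T} and used as a black box. So there is nothing to compare against in the paper itself.

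Your argument is correct and is essentially the classical textbook proof (as in Takesaki or Sakai). Both directions are handled cleanly: in $(\Leftarrow)$ the averaging trick $p+u^*u\le\bold{1}$, $q+uu^*\le\bold{1}$ followed by compression with $p$ and $q$ is the standard way to force $up=0=qu$; in $(\Rightarrow)$ the functional-calculus perturbation $x\mapsto x(\bold{1}\pm f(x^*x))$ to show $x^*x$ is a projection, and then the perturbation by $w=\alpha(\bold{1}-q)c(\bold{1}-p)$ to obtain the orthogonality relation, are exactly the usual moves. One cosmetic point: the line ``$\|x\pm w\|^2=\max(1,\|w\|^2)\le 1$'' should read ``$=1$'' (since $\max(1,\cdot)\ge 1$ always); what you really need---and what your orthogonality computations deliver---is $p+w^*w\le p+\|w\|^2(\bold{1}-p)\le\bold{1}$ for $\alpha\le 1$, which is fine. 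Also implicitly you use $x\neq 0$ (so $\|p\|=1$); this is automatic since in a unital algebra $0=\tfrac12(\bold{1}+(-\bold{1}))$ is never extreme.
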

\begin{remark}
\label{rem:closed}
By Theorem \ref{t:lin-ext}, it is immediate to see that the set of linear
     extreme points of $\cA_1$ is norm closed.
\end{remark}
In the below theorem, we show that linear extreme points of the closed unit ball remain linear extreme in its bidual. Later in Section \ref{sec:main result}, as a consequence of Theorem \ref{t:B}, we show that any linear extreme point of $\cA_1$ is a strongly extreme point of $\cA_1$.
 \begin{theorem}
     \label{t:ext-in-bidual}
     Let $x$ be a linear extreme point of 
     $\cA_1$. Then $x$ is also a linear extreme point of 
     $\cA_1^{**}$. In particular, every linear extreme point is a weak$^{*}$-extreme point and remains  so in all the biduals. 
 \end{theorem}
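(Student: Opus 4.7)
The plan is to reduce to Theorem~\ref{t:lin-ext}: I will verify that $x$, viewed as an element of $\mathcal{A}^{**}$, still satisfies the characterization of linear extremality given there, namely that it is a partial isometry with $(\mathbf{1}-x^*x)\mathcal{A}^{**}(\mathbf{1}-xx^*)=\{0\}$. That $x$ remains a partial isometry in $\mathcal{A}^{**}$ is immediate, since the canonical embedding $\mathcal{A}\hookrightarrow\mathcal{A}^{**}$ is an isometric, unital $*$-homomorphism: the identity $xx^*x=x$ transfers without change, and the projections $\mathbf{1}-x^*x$, $\mathbf{1}-xx^*$ have the same meaning in both algebras.

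The substance of the proof is therefore the orthogonality condition in $\mathcal{A}^{**}$. My approach uses two ingredients: Goldstine's theorem, giving weak$^*$-density of $\mathcal{A}_1$ in $\mathcal{A}_1^{**}$ (and hence of $\mathcal{A}$ in $\mathcal{A}^{**}$), together with the fact that $\mathcal{A}^{**}$ is a von Neumann algebra, in which multiplication is separately weak$^*$-continuous. For any $\xi\in\mathcal{A}^{**}$, fix a bounded net $(a_\alpha)\subset\mathcal{A}$ with $a_\alpha\xrightarrow{w^*}\xi$. By separate weak$^*$-continuity of left and right multiplication by the fixed elements $\mathbf{1}-x^*x$ and $\mathbf{1}-xx^*$, we obtain
$$(\mathbf{1}-x^*x)\,a_\alpha\,(\mathbf{1}-xx^*)\;\xrightarrow{w^*}\;(\mathbf{1}-x^*x)\,\xi\,(\mathbf{1}-xx^*).$$
Each term on the left vanishes by the hypothesis on $x$ in $\mathcal{A}$, so the limit is zero. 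Applying Theorem~\ref{t:lin-ext} inside $\mathcal{A}^{**}$ then yields that $x$ is a linear extreme point of $\mathcal{A}_1^{**}$.

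For the ``in particular'' conclusion, being a weak$^*$-extreme point is, by definition, linear extremality in the bidual; and the very same argument, with $\mathcal{A}^{**}$ playing the role of $\mathcal{A}$ and $\mathcal{A}^{(4)}$ that of $\mathcal{A}^{**}$, can be iterated through every further even-order dual. The one subtle point I expect to flag carefully is the use of \emph{separate}, rather than joint, weak$^*$-continuity of multiplication: this is precisely what allows the passage to the limit through the fixed outer factors even though the approximating net $(a_\alpha)$ does not converge to $\xi$ in norm. Modulo that standard observation, no extra estimates are needed, and no additional machinery beyond Goldstine and Theorem~\ref{t:lin-ext} enters the argument.
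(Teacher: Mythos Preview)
Your proposal is correct and follows essentially the same route as the paper: both arguments reduce to Theorem~\ref{t:lin-ext}, use weak$^*$-density of $\mathcal{A}$ in $\mathcal{A}^{**}$ to approximate an arbitrary $\xi\in\mathcal{A}^{**}$ by a net in $\mathcal{A}$, and then invoke separate weak$^*$-continuity of the map $z\mapsto(\mathbf{1}-x^*x)z(\mathbf{1}-xx^*)$ to pass the vanishing condition to the limit. Your write-up is slightly more explicit (naming Goldstine, noting why $x$ stays a partial isometry, and spelling out the iteration through higher even duals), but the underlying idea is identical.
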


 \begin{proof}
 Since $x$ is a linear extreme point of $\cA_1$, it follows from Theorem \ref{t:lin-ext} that $x$ is a partial isometry and it satisfies Equation \ref{eq:ext-cond}.
 So, to show $x$ is a linear extreme point of $\cA_1^{**}$, it remains to prove
 \[
  (\bold{1}-x^*x)\cA^{**} (\bold{1}-xx^*) =\{0\}.
 \]
 Let $y \in \mathcal{A}^{**}$ be arbitrary. If $y = 0$, then we are done. Now consider the case $y \neq 0$.  
Applying the fact that $\mathcal{A}$ is weak$^*$-dense in $\mathcal{A}^{**}$, there exists a net $\{x_\alpha\} \subset \mathcal{A}$ such that $x_\alpha$ converges to $y$ in the weak$^*$-topology.  
For fixed $b, c \in \mathcal{A}$, we know that the map $z \mapsto b z c$ is weak$^*$-continuous on $\mathcal{A}^{**}$. Thus,
$(\mathbf{1} - x^*x) x_\alpha (\mathbf{1} - xx^*) \rightarrow
(\mathbf{1} - x^*x) y (\mathbf{1} - xx^*)$
in the weak$^*$-topology. Using Equation~\ref{eq:ext-cond}, we have
$(\mathbf{1} - x^*x) x_\alpha (\mathbf{1} - xx^*) = 0$
for all $\alpha$. Hence,
$(\mathbf{1} - x^*x) y (\mathbf{1} - xx^*) = 0.$
This completes the proof.
 \end{proof}


The next theorem establishes that the quotient map preserves linear extreme points.
 \begin{theorem}
      \label{t:ideal}
     Let $\cA$ be a unital $C^*$-algebra. Let $\cI$ be a non-trivial proper 
     closed two-sided ideal in $\cA$.
     Let $x$ be a linear extreme point of 
     $\cA_1$, then $\dist(x, \cI)=1$. Moreover, $x+\cI$ is a linear extreme point of $(\cA/ \cI)_1$.
  \end{theorem}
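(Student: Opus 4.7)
The plan is to exploit the algebraic content of $x$ being a linear extreme point, via Theorem~\ref{t:lin-ext}, to extract an identity forcing $\mathbf{1}$ to lie in any two-sided ideal that contains $x$; this single observation drives both claims.

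Concretely, I would first invoke Theorem~\ref{t:lin-ext} to record that $x$ is a partial isometry satisfying the annihilator condition $(\mathbf{1} - x^*x)\cA(\mathbf{1} - xx^*) = \{0\}$. Specializing $a = \mathbf{1}$ gives $(\mathbf{1} - x^*x)(\mathbf{1} - xx^*) = 0$, which on expansion yields the key identity
\[
\mathbf{1} = x^*x + xx^* - (x^*x)(xx^*).
\]
Every term on the right belongs to any two-sided ideal of $\cA$ that contains $x$ (since $\cI$ is closed under products on either side); so if $x \in \cI$ we would conclude $\mathbf{1} \in \cI$, contradicting that $\cI$ is proper. Hence $x \notin \cI$.

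Writing $\pi : \cA \to \cA/\cI$ for the quotient $*$-homomorphism, the previous paragraph gives $\pi(x) \neq 0$. Since $\pi$ is a $*$-homomorphism, it transports the partial-isometry relation $x = xx^*x$ to $\pi(x) = \pi(x)\pi(x)^*\pi(x)$, so $\pi(x)$ is a non-zero partial isometry in the $C^*$-algebra $\cA/\cI$ and therefore has norm $1$; this is exactly $\dist(x,\cI) = \|\pi(x)\| = 1$.

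For the moreover statement, I would apply $\pi$ to the annihilator condition and use surjectivity of $\pi$ together with $\pi(\mathbf{1}) = \mathbf{1}_{\cA/\cI}$ to obtain
\[
(\mathbf{1}_{\cA/\cI} - \pi(x)^*\pi(x))(\cA/\cI)(\mathbf{1}_{\cA/\cI} - \pi(x)\pi(x)^*) = \{0\}.
\]
Combined with $\pi(x)$ being a partial isometry, a second appeal to Theorem~\ref{t:lin-ext} identifies $x + \cI$ as a linear extreme point of $(\cA/\cI)_1$. The only genuine obstacle is spotting the identity $\mathbf{1} = x^*x + xx^* - (x^*x)(xx^*)$; once one sees that $\mathbf{1}$ lies in the algebraic ideal generated by $x^*x$ and $xx^*$, everything else is a routine transfer of the Takesaki characterization across $\pi$.
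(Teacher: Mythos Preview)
Your proof is correct and follows essentially the same approach as the paper: both arguments expand the condition $(\mathbf{1}-x^*x)y(\mathbf{1}-xx^*)=0$ to force $\mathbf{1}\in\cI$ whenever $x\in\cI$ (the paper uses general $y$, you specialize to $y=\mathbf{1}$), and both then push the partial-isometry and annihilator conditions through the quotient map $\pi$ to invoke Theorem~\ref{t:lin-ext} in $\cA/\cI$. The only cosmetic difference is that you deduce $\|\pi(x)\|=1$ from $\pi(x)$ being a non-zero partial isometry, whereas the paper first establishes extremality of $\pi(x)$ and then reads off norm one.
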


 \begin{proof}
 We first show that $x \notin \mathcal{I}$. Suppose, to the contrary, that $x \in \mathcal{I}$. Let $y \in \mathcal{A}$ be arbitrary. Since $x$ is a linear extreme point of $\mathcal{A}_1$, by Equation~\ref{eq:ext-cond} we have
\[
y = y x x^* + x^* x y - x^* x y x x^*.
\]
Thus, $y \in \mathcal{I}$. Hence $\mathcal{A} \subset \mathcal{I}$, a contradiction, since $\mathcal{I}$ is a proper ideal. Therefore, $\mathrm{dist}(x,\mathcal{I}) > 0$.

Next, we show that $x + \mathcal{I}$ is a linear extreme point of $(\mathcal{A}/\mathcal{I})_1$. Let $\pi : \mathcal{A} \to \mathcal{A}/\mathcal{I}$ be the canonical quotient map, given by $\pi(a) = a + \mathcal{I}$. By Theorem~\ref{t:lin-ext}, $x$ is a partial isometry and satisfies Equation~\ref{eq:ext-cond}. Therefore, $\pi(x)$ is a partial isometry in $\mathcal{A}/\mathcal{I}$.

It remains to show that $\pi(x)$ satisfies Equation~\ref{eq:ext-cond}. Observe that for any $y \in \mathcal{A}$,
\begin{align*} 
& (\pi(\bold{1})-\pi(x)^*\pi(x))\pi(y)(\pi(\bold{1})-\pi(x)\pi(x)^*)\\ =& \pi((\bold{1}-x^*x)y (\bold{1}-xx^*))\\ =& \pi(0) =0. 
\end{align*}
Thus, $\pi(x)$ is a linear extreme point of 
$(\mathcal{A}/\mathcal{I})_1$. Since every linear extreme point has norm one, it follows that $\|\pi(x)\| = 1$. In particular, $\mathrm{dist}(x,\mathcal{I}) = 1$. This completes the proof.
   \end{proof}

 \section{Characterization of $C^*$-Extreme Points}
\label{sec:C*-ext}
This section contains some preliminary results on $C^*$-extreme points in 
a von Neumann algebra, which will be used in the next section to prove our main results.

Recall that to show a point is a linear extreme point, it is enough to consider the point as a convex combination of two points. The following lemma, which is an analogue of \cite[Lemma~17]{LP}, states that the same holds in the $C^*$-extreme case. The idea of the proof is similar to that of \cite[Proposition~3.2]{BDMS}.
 
   \begin{lemma}
       \label{lem:two sums}
       Let $\cA_1$ be the closed unit ball of $\cA$. The following statements are equivalent.
       \begin{enumerate} [(i)]
           \item \label{i1:for k}
           $a\in \cA_1$ is a $C^*$-extreme point of
           $\cA_1$.
           \item
           \label{i2:for 2}
           If $a=\sum_{i=1}^2t_i^*a_it_i$,  where $a_i\in \cA_1$ and $t_i$'s are invertible in $\cA$ such that $\sum_{i=1}^2t_i^*t_i=\bold{1}$, then there exist unitaries $u_i\in \cA$ such that $a_i=u_i^*au_i$ for $i=1,2$.
       \end{enumerate}
   \end{lemma}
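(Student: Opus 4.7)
The implication (i)$\Rightarrow$(ii) is immediate from the definition of a $C^*$-extreme point, since (ii) is just the instance $k=2$. The substance lies in (ii)$\Rightarrow$(i). My plan is induction on the number of terms $k$ appearing in a proper $C^*$-convex decomposition of $a$; the base case $k=2$ is precisely hypothesis (ii), while for $k=1$ the single coefficient $t_1$ satisfies $t_1^*t_1=\bold{1}$ and is invertible, hence unitary, so the conclusion is trivial.

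For the inductive step, suppose $k\geq 3$ and $a=\sum_{i=1}^k t_i^* a_i t_i$ with each $t_i\in \cA$ invertible and $\sum_{i=1}^k t_i^* t_i=\bold{1}$. The idea is to amalgamate the first $k-1$ summands into a single element of $\cA_1$, thereby reducing to a proper two-term $C^*$-convex combination. Put
\[
s=\sum_{i=1}^{k-1}t_i^* t_i=\bold{1}-t_k^* t_k.
\]
Invertibility of each $t_i$ implies that each $t_i^* t_i$ dominates a positive scalar multiple of $\bold{1}$, so $s$ is positive and invertible. Define $r_i=t_i s^{-1/2}$ for $1\leq i\leq k-1$; these are invertible, $\sum_{i=1}^{k-1}r_i^* r_i=\bold{1}$, and the element $b:=\sum_{i=1}^{k-1}r_i^* a_i r_i$ lies in $\cA_1$ by the $C^*$-convexity of the unit ball recalled in the introduction. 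A direct computation gives
\[
a=(s^{1/2})^{*}\, b\, s^{1/2}+t_k^* a_k t_k,
\]
which is a proper two-term $C^*$-convex combination with coefficients $s^{1/2}$ and $t_k$.

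Applying (ii) to this combination yields unitaries $u,u_k\in \cA$ with $b=u^* a u$ and $a_k=u_k^* a u_k$. Conjugating back,
\[
a = u b u^{*} = \sum_{i=1}^{k-1}(r_i u^{*})^{*}\, a_i\, (r_i u^{*}),
\]
and $\sum (r_i u^{*})^{*}(r_i u^{*}) = u\bigl(\sum r_i^* r_i\bigr)u^{*}=\bold{1}$, so this exhibits $a$ as a proper $C^*$-convex combination of $a_1,\dots,a_{k-1}$ with $k-1$ invertible coefficients. The inductive hypothesis then produces unitaries giving $a_i\sim a$ for each $i<k$, and combined with $a_k\sim a$ this closes the induction.

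The step that requires the most care is ensuring that $s=\sum_{i<k}t_i^* t_i$ is invertible, so that the rescaling by $s^{-1/2}$ is legal; this is exactly the point at which the properness of the decomposition (invertibility of every $t_i$) is indispensable, since without it the grouped coefficient could degenerate and the reduction to two summands would fail.
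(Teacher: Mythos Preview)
Your proof is correct and follows essentially the same approach as the paper: both argue by induction, amalgamating all but one summand via the positive square root of the partial coefficient sum, then invoking (ii) on the resulting two-term combination. Your version is in fact slightly more explicit than the paper's at one point: after obtaining $b=u^*au$, you conjugate back to write $a$ itself as a proper $(k-1)$-term $C^*$-convex combination before invoking the induction hypothesis, whereas the paper applies the induction hypothesis directly to the amalgamated element (which, strictly speaking, requires the observation that property (ii) transfers to unitary conjugates of $a$, or else precisely the conjugation you supply).
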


   \begin{proof}
     $(i)\implies (ii)$ is straightforward. Now, we show   $(ii)\implies (i)$. To prove this, we need to show that whenever 
$a = \sum_{i=1}^n t_i^* a_i t_i$
is a proper $C^*$-convex combination of $a_i \in \mathcal{A}_1$, then each $a_i$ is unitarily equivalent to $a$. We proceed by induction on $n$. The case $n = 2$ holds by assumption. Suppose the result is true for $n = k$. Now consider
\[
a = \sum_{i=1}^{k+1} t_i^* a_i t_i 
= \sum_{i=1}^{k} t_i^* a_i t_i + t_{k+1}^* a_{k+1} t_{k+1},
\]
where $t_i \in \mathcal{A}$ are invertible with $\sum_{i=1}^{k+1} t_i^* t_i = \mathbf{1}$ and $a_i \in \mathcal{A}_1$. Let $t$ be the positive square root of $\sum_{i=1}^k t_i^* t_i$. Then $t$ is invertible and
\[
t^* t + t_{k+1}^* t_{k+1} = \mathbf{1}.
\]
Define
\[
x = \sum_{i=1}^k (t_i t^{-1})^* \, a_i \, (t_i t^{-1}).
\]
Then $x \in \mathcal{A}_1$ and
\[
a = t^* x t + t_{k+1}^* a_{k+1} t_{k+1}.
\]

By assumption (\ref{i2:for 2}), both $x$ and $a_{k+1}$ are unitarily equivalent to $a$. Since $x$ is a proper $C^*$-convex combination of the $a_1, \dots, a_k$, the induction hypothesis implies that $a_i$ is unitarily equivalent to $x$ for $i = 1, \dots, k$. Hence each $a_i$ is unitarily equivalent to $a$ for $i = 1, \dots, k$.
This completes the proof.
 \end{proof}

The theorem below states that unitaries are always $C^*$-extreme points of the closed unit ball in a $C^*$-algebra.

 \begin{theorem}
     \label{t:unitary-in-A}
     Let $\cA$ be a $C^*$-algebra. Then the unitaries are $C^*$-extreme points of $\cA_1$.
 \end{theorem}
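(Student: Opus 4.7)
The plan is to apply Lemma~\ref{lem:two sums} to restrict to two-term proper $C^\ast$-convex combinations, then work in a faithful unital representation $\cA \hookrightarrow B(\cH)$ so that Hilbert-space techniques become available. I would suppose $u = t_1^\ast a_1 t_1 + t_2^\ast a_2 t_2$ with $a_i \in \cA_1$ and $t_i$ invertible satisfying $t_1^\ast t_1 + t_2^\ast t_2 = \b1$, and aim to show that each $a_i$ is unitarily equivalent to $u$.

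The first step is to extract a rigidity identity via Cauchy-Schwarz. For any unit vector $\xi \in \cH$, I would expand
\[
1 = \|u\xi\|^2 = \sum_{i=1}^2 \langle a_i t_i \xi,\, t_i u\xi\rangle
\]
and chain three inequalities: Cauchy-Schwarz in $\cH$ applied to each inner product, the contraction bound $\|a_i t_i \xi\| \le \|t_i \xi\|$, and Cauchy-Schwarz in $\R^2$ using the normalizations $\sum_i \|t_i \xi\|^2 = 1 = \sum_i \|t_i u\xi\|^2$. This bounds the right-hand side by $1$; since the sum equals $1$, equality throughout forces $a_i t_i \xi = t_i u\xi$ for every $\xi$, and hence $a_i t_i = t_i u$, i.e.\ $a_i = t_i u t_i^{-1}$. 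Applying the same argument to $u^\ast = \sum t_i^\ast a_i^\ast t_i$ would give $a_i^\ast t_i = t_i u^\ast$, and combining the two identities yields $a_i^\ast a_i = a_i a_i^\ast = \b1$, so each $a_i$ is already a unitary in $\cA$.

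The final step is to upgrade the similarity $a_i = t_i u t_i^{-1}$ to an honest unitary equivalence implemented by an element of $\cA$. From $a_i t_i = t_i u$ together with $a_i^\ast a_i = \b1$, a short computation gives $t_i^\ast t_i = u^\ast (t_i^\ast t_i) u$, so the positive invertible element $|t_i| = (t_i^\ast t_i)^{1/2}$ commutes with $u$. Taking the polar decomposition $t_i = v_i |t_i|$ (which is available in $\cA$ because $t_i$ is invertible, with $v_i = t_i |t_i|^{-1}$ a unitary in $\cA$), I would obtain
\[
a_i = v_i |t_i|\, u\, |t_i|^{-1} v_i^\ast = v_i u v_i^\ast,
\]
so that $u_i := v_i^\ast$ satisfies $a_i = u_i^\ast u u_i$, as required by Lemma~\ref{lem:two sums}.

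The main obstacle is this last step: similarity to a unitary via an invertible element does not, a priori, give unitary equivalence, and even though similar unitaries on a Hilbert space are classically unitarily equivalent, one must actually produce the implementing unitary inside $\cA$. The polar-decomposition argument addresses this in the abstract $C^\ast$-algebraic setting by exploiting the positivity of $|t_i|$ together with the fact that it commutes with $u$.
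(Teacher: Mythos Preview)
Your argument is correct and follows the same overall strategy the paper invokes: embed $\cA$ faithfully in $B(H)$ and run the Hilbert-space proof of \cite[Proposition~24]{LP}. Since the paper itself gives no details beyond that citation, your write-up effectively supplies them; the Cauchy--Schwarz chain forcing $a_i t_i = t_i u$ and the polar-decomposition step exploiting $u|t_i| = |t_i|u$ to upgrade the similarity to a unitary equivalence implemented inside $\cA$ are precisely the expected ingredients.
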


 \begin{proof}
It is well known that $\mathcal{A}$ can be embedded in $B(H)$, the algebra of all bounded linear operators on a Hilbert space $H$. Thus, one can prove that unitaries are $C^*$-extreme points in $\mathcal{A}_1$ using the same argument as in the case of $B(H)$ in \cite[Proposition~24]{LP}.
 \end{proof}

The  theorem that follows is the classical Wold decomposition in $B(H)$ - the algebra of all bounded linear operators on a Hilbert space $H$. For the sake of completeness, we include a proof here, as the specific nature of the decomposition of 
$H$ will be used subsequently.
 \begin{theorem}
 \label{thm:Wold decomposition}
 Let $a$ be an isometry in $B(H)$. Let $K=\cap_{n=0}^\infty a^n H$ and $R=\range(a)^\perp$. Then the following holds.
 \begin{enumerate} [(i)]
     \item The closed subspace $K$ is $a$-invariant and $a|_K$ is unitary in $B(K)$.
     \item   The closed subspace $K^\perp$ is  $a$-invariant.
     \item For all $n,m \in \N \cup \{0\}$ with $n \neq m$, $a^n R \perp a^m R$ and $K^\perp= \oplus_{n=0}^\infty a^n R$.
 \end{enumerate}
 \end{theorem}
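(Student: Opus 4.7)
The plan is to exploit the orthogonal decomposition $H = aH \oplus R$, which holds because $R = (aH)^\perp$ and $aH$ is closed (the range of the isometry $a$ is closed, since a Cauchy preimage is recovered from a convergent image). Iterating and using that each $a^n$ is itself an isometry, I get, for every $N \geq 0$, the finite decomposition
\[
H \;=\; a^{N+1}H \,\oplus\, \bigoplus_{k=0}^{N} a^k R,
\]
with all summands closed and mutually orthogonal; this will drive all three parts.

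For (i), I would first note that $K$ is closed as an intersection of closed subspaces, and $a$-invariant because $x \in a^n H$ for every $n$ forces $ax \in a^{n+1}H$ for every $n$. To see $a|_K$ is unitary, only surjectivity onto $K$ is at stake. Given $y \in K$, for each $n$ there is $z_n \in a^n H$ with $y = a z_n$; since $a^*a = \bold{1}$ (as $a$ is an isometry), $z_n = a^* y$ independent of $n$, so $a^* y \in K$ and $a(a^* y) = y$. For (ii), given $y \in K^\perp$ and $z \in K$, the previous step supplies $z' \in K$ with $a z' = z$, hence $a^* z = a^* a z' = z'$; therefore $\langle ay, z\rangle = \langle y, a^* z\rangle = 0$.

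For (iii), orthogonality $a^n R \perp a^m R$ for $n < m$ is immediate from $\langle a^n r, a^m r'\rangle = \langle r, a^{m-n} r'\rangle$ together with $a^{m-n} r' \in aH = R^\perp$. To identify $K^\perp$ with $\bigoplus_{n\geq 0} a^n R$, I would use the finite decomposition above: with $P_n$ the projection onto $a^n H$ and $Q_k$ the projection onto $a^k R$, every $x \in H$ satisfies
\[
x \;=\; P_{N+1} x \;+\; \sum_{k=0}^{N} Q_k x.
\]
Since $\{a^n H\}$ is a decreasing family of closed subspaces with intersection $K$, the projections $P_n$ converge strongly to the projection $P_K$ onto $K$; passing to the limit yields $x = P_K x + \sum_{k=0}^\infty Q_k x$, so $x \in K^\perp$ exactly when $x = \sum_{k=0}^\infty Q_k x \in \bigoplus_{n\geq 0} a^n R$.

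The routine part is the algebraic identities; the one analytic point that needs care is the strong convergence $P_n \to P_K$ for a decreasing nest of projections. This is standard (either via the spectral theorem for the monotone self-adjoint sequence $P_n$, or by a direct argument showing $\{P_n x\}$ is Cauchy with limit in every $a^n H$, hence in $K$), but it is the only step requiring an honest limiting argument and is the main obstacle to a purely formal proof.
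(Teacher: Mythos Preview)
Your argument is correct. Parts (i), (ii), and the orthogonality claim in (iii) match the paper's proof almost verbatim, up to your explicit use of $a^*$ in place of the paper's appeal to uniqueness of preimages under an isometry. The genuine difference lies in identifying $K^\perp$ with $\bigoplus_{n\geq 0} a^n R$: the paper proves the equivalent statement $K = \bigl(\bigoplus_n a^n R\bigr)^\perp$ by a direct double inclusion, one direction of which is a minimal-counterexample argument (pick the smallest $N$ with $\xi \notin a^N H$, write $\xi = a^{N-1}\xi_0$ with $\xi_0 \notin R^\perp$, and exhibit a nonzero inner product against some $a^{N-1}\theta$). You instead pass to the limit in the finite orthogonal decomposition $H = a^{N+1}H \oplus \bigoplus_{k\leq N} a^k R$ using the strong convergence $P_n \to P_K$ of the decreasing nest of range projections. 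Your route is more structural and arguably cleaner, and the monotone-projection convergence you invoke is standard; the paper's route is more elementary in that it never appeals to any limiting fact beyond the definition of $K$, at the price of a slightly ad hoc contradiction step.
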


 \begin{proof}
 \begin{enumerate}[(i)]
     \item Using the construction of $K$, one can easily see that $K$ is $a$-invariant. Since $a$ is an isometry, it follows that $a|_K$ is injective. Now we show that $a|_K$ is surjective. Let $\eta \in K$ be arbitrary. Then $\eta \in a H$. So, there exists a unique (because $a$ is isometry) $\xi \in H$ such that $a(\xi)=\eta$. Our claim is $\xi \in K$. Since $\eta \in K= \cap_{n=0}^\infty a^n H$, it follows that there exists $\xi_{0}\in H$ such that $a^{n+1}(\xi_{0})=\eta$. In particular, $a(a^n(\xi_{0}))=\eta$. But $\xi\in H$ was a unique vector such that $a(\xi)=\eta$. We have $a^n(\xi_{0})=\xi$. Thus, $\xi \in a^n H$. Since $n\in \N$ is arbitrary, it follows that $\xi \in K$. Hence $a|_K$ is surjective. In other words, $a|_K$ is a unitary operator in  $B(K)$.

     \item Let $\xi \in K^\perp$ and $\eta \in K$ be fixed.
     We need to show that $\langle a \xi, \eta\rangle=0$. But $a|_K$ is unitary, so it is equivalent to show that $\langle a \xi, a \eta\rangle=0$, which is true because $a$ is an isometry, $\xi\in K^\perp$, and $\eta \in K$.

     \item Let $\xi, \eta \in R$ be arbitrary and $n,m \in \N \cup \{0\}$ such that $n\neq m$. Without loss of generality, assume $n>m$. Consider $\langle a^n \xi, a^m \eta \rangle$. Since $a$ is an isometry, it follows that $\langle a^n \xi, a^m \eta \rangle= \langle a^{n-m} \xi, \eta \rangle=0$ (because $a^{n-m} \xi \in \range(a)$ and $\eta \in R= \range (a)^\perp$). 
     
     Now we show that $K^\perp= \oplus_{n=0}^\infty a^n R$ which is equilvalent to show that $K= (\oplus_{n=0}^\infty a^n R)^\perp$. For that, let $\xi \in (\oplus_{n=0}^\infty a^n R)^\perp$. We need to show that $\xi \in a^n H$ for all $n \geq 0$. If not, let $N \in \N$ be the smallest number such that $\xi \notin a^N H$. Since $\xi \in a^{N-1} H$, we have $\xi= a^{N-1} \xi_0$. Observe that $\xi_0 \notin \range(a)$.
     In other words, $\xi \notin R^\perp$. Thus there exists $\theta \in R$ such that $\langle \xi_0, \theta \rangle \neq 0$ which in turn gives $\langle a^{N-1}(\xi_0), a^{N-1}(\theta) \rangle \neq 0$. 
     That is,  $\langle \xi, a^{N-1}(\theta) \rangle \neq 0$ which is a contradiction. Hence $\xi \in K$. 

Conversely, let $\xi \in K$. We need to show that $\langle \xi, a^n(\theta) \rangle=0$ for all $\theta \in R=\range(a)^\perp$ and $n\geq 0$. Since $\xi \in K$, it follows that there exists $\eta\in H$ such that $\xi=a^{n+1}(\eta)$. Thus $\langle \xi, a^n(\theta) \rangle= \langle a^{n}(a(\eta)), a^n(\theta) \rangle= \langle a(\eta), \theta \rangle$ which is zero because $a(\eta) \in \range(a)$ and $\theta \in R= \range(a)^\perp$. This completes the proof.
 \end{enumerate}
 \end{proof}

The following remark illustrates the matrix decomposition of an isometry in $B(H)$.
\begin{remark}
    \label{rem:some notations}
Let $a$  be an isometry in $B(H)$. Let $R= \range(a)^\perp$. Then by Theorem \ref{thm:Wold decomposition}, $H$ can be decomposed as $H= K \oplus K^\perp$, where $K= \cap_{n=0}^\infty a^n H$ and $K^\perp= \oplus_{n=0}^\infty a^n R$ and the matrix decomposition of $a: K \oplus K^\perp \to K \oplus K^\perp$ is given by
\[
\begin{bmatrix}
         a_1 & 0\\
         0 & a_2
     \end{bmatrix},
     \]
     where $a_1=a|_K$ is a unitary operator in $B(K)$ and $a_2=a|_{K^\perp}$ is a shift operator in $B(K^\perp)$.
     Similarly, if  $b$ is another isometry in $B(H)$. We denote $R'= \range(b)^\perp$, $L= \cap_{n=0}^\infty b^n H$ and $L^\perp= \oplus_{n=0}^\infty b^n R'$. Then the matrix decomposition of $b : L\oplus L^\perp \to L\oplus L^\perp$ is given by 
     \[
     \begin{bmatrix}
         b_1 & 0\\
         0 & b_2
     \end{bmatrix},
     \]
     where $b_1=b|_L$ is a unitary operator in $B(L)$ and $b_2=b|_{L^\perp}$ is a shift operator in $B(L^\perp)$. 
\end{remark}

The following lemmas show that the Wold decomposition, originally proved for $B(H)$, also holds in the setting of von Neumann algebras.
Perhaps this is known, but for the sake of completeness, we give a detailed proof.
Using the lemmas below, in Theorem \ref{t:siml imp unit equiv}, we  show that if two isometries in a von Neumann algebra are similar, then they are, in fact, unitarily equivalent.
In the following lemma, for the closed subspace $K$ of the Hilbert space $H$, we use the notation $\bold{1}_K$ to denote the identity of the space $B(K)$.

\vspace{.3cm}
\textbf{Convention:} In this article, we assume throughout that a von Neumann algebra $\cM$ is embedded in $B(H)$
for some Hilbert space $H$.

 \begin{lemma}
 \label{lem:uni part equi}
 Let $\cM\subset B(H)$ be a von Neumann algebra.
     Let  $a$ and $b$ be isometries in 
     $\cM$ such that $a$ and $b$ are similar, that is, there exists an invertible element $t \in \cM$ such that 
     $b=tat^{-1}$. Let $K,L,a_1,$ and $b_1$ be defined as in Remark \ref{rem:some notations} for $a$ and $b$.
     Then there exists a partial isometry $w\in \cM$ such that 
      $w|_K: K \to L$ is a unitary and   
     $b_1=wa_1w^*$.
 \end{lemma}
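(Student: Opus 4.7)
The plan is to obtain $w$ from the polar decomposition in $\cM$ of the element $tp$, where $p$ is the projection onto $K$. The first step is to identify $L$ with $tK$: the relation $b^n = ta^n t^{-1}$ together with the bijectivity of $t$ on $H$ gives $b^n H = t a^n H$, so
\[
L = \bigcap_{n\ge 0} b^n H = t\Bigl(\bigcap_{n\ge 0} a^n H\Bigr) = tK.
\]
Setting $s := t|_K \colon K \to L$, this is a bounded bijection with bounded inverse; since $K$ is $a$-invariant and $L$ is $b$-invariant, $b = tat^{-1}$ restricts to the intertwining identity $sa_1 = b_1 s$.

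The next -- and only really conceptual -- step is to observe that $|s| := (s^*s)^{1/2}$ commutes with $a_1$. Taking Hilbert-space adjoints of $sa_1 = b_1 s$ between $K$ and $L$ gives $a_1^* s^* = s^* b_1^*$, and multiplying together (using $b_1^* b_1 = \bold{1}_L$, since $b_1$ is unitary) yields $a_1^*(s^*s)a_1 = s^*s$. As $a_1$ is unitary, this forces $s^*s$, and hence $|s|$, to commute with $a_1$. Writing the polar decomposition $s = v|s|$ with $v \colon K \to L$ a unitary (available since $s$ is invertible), we get
\[
b_1 = s a_1 s^{-1} = v|s|\,a_1\,|s|^{-1}v^* = v a_1 v^*.
\]

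It remains to realize $v$ as the restriction of a partial isometry in $\cM$. The projection $p$ onto $K = \bigcap_{n\ge 0} a^n H$ is the strong-operator limit of the decreasing sequence of range projections $a^n a^{*n} \in \cM$, so $p \in \cM$ and $tp \in \cM$. The polar decomposition $tp = w|tp|$ in $\cM$ then yields a partial isometry $w \in \cM$ whose initial space equals the support of $|tp|$, namely $(\ker tp)^\perp = K$ (as $tp$ kills $K^\perp$ and is injective on $K$), and whose final space equals $\overline{\range(tp)} = tK = L$; in particular $w|_K \colon K \to L$ is a unitary. A quick inner-product comparison shows $pt^*\eta = s^*\eta$ for every $\eta \in L$, hence $|tp|^2|_K = pt^*t|_K = s^*s$, so $|tp||_K = |s|$ and $w|_K = s|s|^{-1} = v$. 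Thus $b_1 = v a_1 v^* = w a_1 w^*$ as required. The only nontrivial input is the commutation $|s|a_1 = a_1|s|$; the remainder is standard von Neumann algebra bookkeeping (strong-operator closedness of $\cM$ and the presence of polar decompositions in $\cM$).
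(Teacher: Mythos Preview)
Your proof is correct and follows essentially the same route as the paper: both restrict $t$ to an invertible map $K\to L$, take the polar decomposition of this compression inside $\cM$, and show the resulting partial isometry intertwines $a_1$ and $b_1$. Your direct commutation argument ($a_1^{*}(s^{*}s)a_1=s^{*}s$ from $sa_1=b_1s$ and $b_1$ unitary, hence $|s|a_1=a_1|s|$) is a slightly cleaner path to the same conclusion that the paper obtains via uniqueness of the polar decomposition of $|t_0|$.
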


 \begin{proof}
     Since $a$ and $b$ are isometries in $\cM$, it follows that $a_1$ and $b_1$ are injective. Also, by construction of $K$ and $L$, one can easily show that $a_1$ is a surjective map from $K$ to $K$. Similarly  $b_1$ is surjective from $L$ to $L$. Thus, $a_1\in B(K)$ and $b_1\in B(L)$ are unitaries. We also have $b^n=ta^nt^{-1}$ for all $n\geq 0$. In other words, $b^nt=ta^n$ for all $n\geq 0$. Now, we show that $t$ maps $K$ to $L$. For that, let $\xi\in K=\cap_{n=0}^\infty a^n(H)$, then there exists $\eta\in H$ (depending on $n)$ such that $\xi=a^n\eta$. Thus $t(\xi)=ta^n(\eta)= b^nt(\eta) \in \range(b^n)$. Since $n$ is arbitrary, one has $t(\xi)\in \range(b^n)$ for all $n\geq 0$. Hence $t(\xi)\in L$. Similarly one can show that $t^{-1}$ maps $L$ to $K$. In particular, $t$ restricted to $K$ is invertible. So, the matrix decomposition of $t: K \oplus K^\perp \to L \oplus L^\perp$ is given by
     \[
     \begin{bmatrix}
         t_0 & {*}\\
         0 & {*}
     \end{bmatrix},
     \]
     where $t_{0}: K \to L$ is $t|_K$ and hence invertible. Extend $t_0$ from $K$ to $H$ by defining $t_0$ on $K^\perp$ as the zero map. Thus, we can consider $t_0 \in B(H)$.  Let $e \in \cM$ be the projection onto $K$ and $f \in \cM$ be the projection onto $L$. Then, clearly $t_0=f t e \in \cM$. Now, using the fact that $bt=ta$ and the matrix decomposition of $a, b$ and $t$, we have $b_1t_0=t_0a_1$. 
     Take the polar decomposition of $t_0=w|t_0|$, where $w$ is a partial isometry such that $w|_K : K \to L$ is unitary. By \cite[Proposition~6.1.3]{KR2}, we have $w \in \cM$.
     Applying the polar decomposition of $t_0$ in the equality $b_1t_0=t_0a_1$, one has $b_1w|t_0|= w|t_0|a_1$. This gives
     \[
     w^*b_1w|t_0|= |t_0|a_1.
     \]
     On multiplying by $a_1^*$ on the right side of the above equality, one obtains
     \[
     w^*b_1w|t_0|a_1^*= |t_0|.
     \]
     For notation simplicity, denote $w^*b_1w=b_1^{'}$. Then the above equality becomes $b_1^{'}|t_0|a_1^*= |t_0|$. This can also be written as  $b_1^{'}a_1^*a_1|t_0|a_1^*= |t_0|$. Observe that $b_1^{'}a_1^*$ is unitary and $a_1|t_0|a_1^*$ is a positive element such that $(b_1^{'}a_1^*)(a_1|t_0|a_1^*)=|t_0|$.
     Thus, it is a polar decomposition of $|t_0|$. Hence $b_1^{'}a_1^*=\bold{1}_K$ and $a_1|t_0|a_1^*=|t_0|$ which in turns gives $w^*b_1wa_1^*=\bold{1}_K$.  In particular, $w^*b_1w=a_1$. This completes the proof.  
 \end{proof}

 \begin{lemma}
     \label{lem:shift part M-vN equiv}
 Let $\cM\subset B(H)$ be a von Neumann algebra.
     Let  $a$ and $b$ be isometries in 
     $\cM$ and $t \in \cM$ be invertible such that 
     $b=tat^{-1}$.  Let $R, R{'}, K^\perp,L^\perp,a_2,$ and $b_2$ be defined as in Remark \ref{rem:some notations} for $a$ and $b$. Then the following hold:
     
     \begin{enumerate} [(i)]
         \item \label{i:p.i.from R to R'} There exists a partial isometry $v \in \cM$ such that $v^*v$ is the projection onto $\range(a)^\perp$ and $vv^*$ projection onto $\range(b)^\perp$.
         \item
         \label{i:p.i.2}
         There exists a partial isometry $s \in \cM$ such that $s^*s$ is the projection onto $K^\perp$ and $ss^*$ is the projection onto $L^\perp$.
     \end{enumerate}
 \end{lemma}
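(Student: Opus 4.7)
The plan is to realize each partial isometry as the phase of an explicit element of $\cM$ obtained by sandwiching $t$ between the appropriate orthogonal projections, and then invoke \cite[Proposition~6.1.3]{KR2} to conclude that the resulting partial isometry lies in $\cM$. A direct appeal to comparison theory is not enough here: knowing that the range projections $aa^*$ and $bb^*$ are Murray--von Neumann equivalent in $\cM$ does not, in general, force equivalence of the defect projections in an arbitrary von Neumann algebra. Invertibility of $t$ provides exactly the extra structure needed.

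For (i), write $p = aa^*$ and $p' = bb^*$, so $R = (\bold{1}-p)H$ and $R' = (\bold{1}-p')H$. Set $v_0 = (\bold{1}-p')\,t\,(\bold{1}-p) \in \cM$. Since $b = tat^{-1}$ and $t$ is a bijection on $H$, $\range(b) = t\,\range(a) = tpH$, and $H = tH = t(pH) + tR = p'H + tR$. Applying $\bold{1}-p'$ annihilates the first summand and yields $R' = (\bold{1}-p')tR = \range(v_0)$. For the kernel, if $\xi \in R$ and $v_0\xi = 0$, then $t\xi \in p'H = \range(b) = t\,\range(a)$, so invertibility of $t$ forces $\xi \in \range(a) \cap R = \{0\}$. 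Hence $\ker(v_0) = pH$. The polar decomposition $v_0 = v|v_0|$ thus produces $v \in \cM$ whose initial projection is $\bold{1}-p = \bold{1} - aa^*$ and whose final projection is $\bold{1}-p' = \bold{1} - bb^*$, as required.

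For (ii), let $e, f \in \cM$ be the projections onto $K$ and $L$; these belong to $\cM$ as the strong-operator limits of the decreasing nets of projections $a^n(a^n)^*$ and $b^n(b^n)^*$. Set $s_0 = (\bold{1}-f)\,t\,(\bold{1}-e) \in \cM$. The argument parallels (i) but uses the key fact, established inside the proof of Lemma~\ref{lem:uni part equi}, that $t$ restricts to a bijection $K \to L$, i.e., $tK = L$ and $t^{-1}L = K$. With this in hand, $H = tK + tK^\perp = L + tK^\perp$; projecting by $\bold{1}-f$ gives $L^\perp = \range(s_0)$, and injectivity on $K^\perp$ follows from $t\xi \in L = tK \Rightarrow \xi \in K \cap K^\perp = \{0\}$. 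The polar decomposition of $s_0$ then delivers the desired $s \in \cM$.

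The main technical point in both parts is securing equality (not merely density) in the range computation: every $\eta \in H$ can be written as $t\zeta$, and then orthogonally decomposing $\zeta$ across the relevant pair $(pH, R)$ or $(K, K^\perp)$ gives the required algebraic splitting of $\eta$. Once this is secured, no further closure argument is needed before applying polar decomposition within $\cM$.
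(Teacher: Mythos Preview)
Your proof is correct. Part~(i) follows the same strategy as the paper---obtain $v$ as the polar phase of an element of $\cM$ built from $t$ and the defect projections---though you compress $t$ itself via $(\bold{1}-p')\,t\,(\bold{1}-p)$ whereas the paper uses $(t^{-1})^{*}p_{0}$; the computations are dual to one another and equally direct.

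Part~(ii), however, takes a genuinely different route. The paper exploits the Wold decompositions $K^{\perp}=\bigoplus_{n\ge 0} a^{n}R$ and $L^{\perp}=\bigoplus_{n\ge 0} b^{n}R'$ and assembles $s$ block by block as the SOT-convergent sum $s=\sum_{n\ge 0} b^{n}v a^{*n}$, each summand being a partial isometry from $a^{n}R$ onto $b^{n}R'$. You instead re-run the compression-and-polar-decomposition argument of~(i) with $(K,L)$ in place of $(\range(a),\range(b))$, using only the fact $tK=L$ already established in the proof of Lemma~\ref{lem:uni part equi}. Your approach is cleaner and more uniform, and it bypasses the SOT convergence entirely. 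The paper's construction buys something extra, though: its $s$ visibly intertwines the shift parts, $bs=sa$ (since $va=0$), which is precisely what Theorem~\ref{t:siml imp unit equiv} needs but what Lemma~\ref{lem:shift part M-vN equiv} does not literally assert. Your $s_{0}$ also satisfies $bs_{0}=s_{0}a$ (because $a$ commutes with $e$ and $b$ with $f$), but passing this to the polar phase $s$ would require an additional step; for the lemma as stated this is not your burden.
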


 \begin{proof}
 \begin{enumerate} [(i)]
   \item  Let $p_0\in \cM$ be the projection onto $\range(a)^\perp$. Using the fact that $b=tat^{-1}$, we see that $(t^{-1})^*$ maps $\ker(a^*)$ onto $\ker(b^*)$. Consider the operator $(t^{-1})^*p_0 \in \cM$. Since $p_0$ is the projection onto $\range(a)^\perp$ and $(t^{-1})^*$ maps $\ker(a^*)$ onto $\ker(b^*)$, it follows that $\ker((t^{-1})^*p_0)^\perp= \range(a)^\perp$ and $\range((t^{-1})^*p_0)=\range(b)^\perp$. Take the polar decomposition  $(t^{-1})^*p_0=v |(t^{-1})^*p_0|$. Note that $v \in \cM$ is a partial isometry whose initial space is $\ker((t^{-1})^*p_0)^\perp$ $= \range(a)^\perp$ and final space is $\range((t^{-1})^*p_0)=\range(b)^\perp$. This $v \in \cM$ is the required partial isometry such that $v^*v$ is the projection onto $\range(a)^\perp$ and $vv^*$ is the projection onto $\range(b)^\perp$.
   
     \item Recall that $R=\range(a)^\perp$ and $R{'}= \range(b)^\perp$. Observe that the closed subspaces $R$ and $a^n(R)$ are isomorphic via the map $a^n|_R : R \to a^n(R)$ for all $n\geq 0$. Similarly, the closed subspaces $R'$ and $b^n(R{'})$ are isomorphic for all $n\geq 0$. Note that for all $n\geq 0$, the operator  $b^nva^{*n} \in \cM$ is a partial isometry whose initial space is $a^n(R)$ and  final space is $b^n(R')$, where $v$ is defined as in  item (\ref{i:p.i.from R to R'}). Furthermore, using \cite{KR2}, the series 
    \[
    s:=\sum_{n=0}^\infty b^n va^{*n}
    \]
     converges in strong operator topology (SOT)  in $\cM$, and $s$ is the partial isometry such that $s^*s$ is the projection onto $K^\perp$ and $ss^*$ is the projection onto $L^\perp$. This completes the proof.
 \end{enumerate}
 \end{proof}

We now prove the main result of this section.
 \begin{theorem}
     \label{t:siml imp unit equiv}
      Let  $a, b \in \cM \subset B(H)$ be isometries. 
     Let $t \in \cM$ be  invertible 
     such that 
     $b=tat^{-1}$. There exists a unitary $u \in \cM$ such that $b=uau^*$.
 \end{theorem}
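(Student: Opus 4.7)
The plan is to exploit the matrix decompositions from Remark \ref{rem:some notations} and splice together the unitary part and the shift part using the two preceding lemmas. With respect to $H = K \oplus K^\perp = L \oplus L^\perp$, we have $a = a_1 \oplus a_2$ and $b = b_1 \oplus b_2$ where $a_1, b_1$ are unitaries and $a_2, b_2$ are shifts. The desired unitary $u \in \cM$ should send $K$ unitarily to $L$ (implementing the equivalence of $a_1$ and $b_1$) and $K^\perp$ unitarily to $L^\perp$ (implementing the equivalence of $a_2$ and $b_2$).

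Concretely, I would take $w \in \cM$ from Lemma \ref{lem:uni part equi} (so $w^*w = P_K$, $ww^* = P_L$, and $b_1 = w a_1 w^*$) and $s \in \cM$ from Lemma \ref{lem:shift part M-vN equiv}(ii) (so $s^*s = P_{K^\perp}$ and $ss^* = P_{L^\perp}$), and define
\[
u := w + s \in \cM.
\]
The fact that $u$ is a unitary is a short verification: expanding $u^*u$ and $uu^*$, the cross terms $w^*s$, $s^*w$, $ws^*$, $sw^*$ all vanish because the initial/final projections of $w$ and $s$ are mutually orthogonal (supports $P_K, P_{K^\perp}$ and ranges $P_L, P_{L^\perp}$), leaving $u^*u = P_K + P_{K^\perp} = \b1$ and $uu^* = P_L + P_{L^\perp} = \b1$.

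The nontrivial point, and the main obstacle, is the intertwining relation $b_2 = (s|_{K^\perp}) a_2 (s|_{K^\perp})^*$ for the shift pieces. Using the explicit formula $s = \sum_{n=0}^\infty b^n v a^{*n}$ from the proof of Lemma \ref{lem:shift part M-vN equiv}, I would establish the global identity $sa = bs$ on $H$. Here the crucial observation is that $va = 0$: since $v^*v$ is the projection onto $R = \range(a)^\perp$, we have $v = vP_R$, and $P_R a = 0$. Combined with $a^{*n}a = a^{*(n-1)}$ for $n \geq 1$ (because $a$ is an isometry), this telescopes to give $sa = \sum_{n\ge 1} b^n v a^{*(n-1)} = b \sum_{m\ge 0} b^m v a^{*m} = bs$. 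Restricting $sa = bs$ to the initial space $K^\perp$ of $s$ (where $a$ acts as $a_2$, and on the final space $L^\perp$ where $b$ acts as $b_2$) gives the required intertwining.

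Finally, with the two block intertwinings $b_1 = (w|_K) a_1 (w|_K)^*$ and $b_2 = (s|_{K^\perp}) a_2 (s|_{K^\perp})^*$ in hand, a direct block-matrix computation of $uau^*$ with source decomposition $K \oplus K^\perp$ and target decomposition $L \oplus L^\perp$ yields $uau^* = b_1 \oplus b_2 = b$, completing the proof.
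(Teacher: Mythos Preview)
Your proposal is correct and follows essentially the same approach as the paper: define $u$ as the block-diagonal operator $w\oplus s$ (equivalently $u=w+s$) using the partial isometries from Lemmas~\ref{lem:uni part equi} and~\ref{lem:shift part M-vN equiv}, verify it is a unitary in $\cM$, and conclude $b=uau^*$. Your write-up is in fact more complete than the paper's, since you explicitly establish the shift intertwining $sa=bs$ via the telescoping computation with $va=0$; the paper simply invokes the two lemmas at this step, even though Lemma~\ref{lem:shift part M-vN equiv} as stated only records the initial and final spaces of $s$ and does not assert the intertwining relation.
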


 \begin{proof}
     Using Remark \ref{rem:some notations}, we obtain the matrix decomposition of $a: K \oplus K^\perp \to K \oplus K^\perp$ as $\begin{bmatrix}
         a_1 & 0\\
         0 & a_2
     \end{bmatrix}$ and the matrix decomposition of $b: L\oplus L^\perp\to L\oplus L^\perp$ as $\begin{bmatrix}
         b_1 & 0\\
         0 & b_2
     \end{bmatrix}$. Define $u:  K \oplus K^\perp \to  L\oplus L^\perp$ as 
     \[
     \begin{bmatrix}
         w & 0\\
         0 & s
     \end{bmatrix},
     \] 
     where $w\in \cM$ is defined as in Lemma \ref{lem:uni part equi} and $s \in \cM$ is defined as in Lemma \ref{lem:shift part M-vN equiv}. Since $w:K \to L$ is unitary and $s: K^\perp \to L^\perp$ is unitary, it follows that $u \in B(H)$ is unitary.  Moreover, $u\in \cM$. Again using Lemmas \ref{lem:uni part equi}, \ref{lem:shift part M-vN equiv}, we have  that $b=uau^*$.
 \end{proof}

\section{Geometric Aspects of $C^*$-Extreme Points}
\label{sec:main result}
This section contains  proofs of Theorems~\ref{t:A} and~\ref{t:B}. 
It is well known that the linear extreme points of $B(H)_1$ are precisely the isometries and coisometries. 
In \cite[Theorem~1.1]{HMP}, Hopenwasser et al.\ characterized the $C^*$-extreme points of the closed unit ball $B(H)_1$. 
In particular, they showed that the $C^*$-extreme points of $B(H)_1$ coincide with its linear extreme points. 
In this section, our aim is to characterize the $C^*$-extreme points of the closed unit ball in a general von Neumann algebra. 
Moreover, we show that, in the setting of a von Neumann algebra, the notions of linear extreme, strongly extreme, and $C^*$-extreme points all coincide.

The idea of the theorem below is to apply a Wold decomposition type theorem in the setting of von Neumann algebras, along the same lines as those used in \cite[Theorem~1.1]{HMP} in the case of $B(H)$.

\begin{theorem}
\label{t:iso-imp-C*ext-in-vN}
    Let $\cM \subset B(H)$ be a von Neumann algebra. Isometries and coisometries are the $C^*$-extreme points of 
    $\cM_1$.
\end{theorem}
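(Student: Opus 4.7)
The argument follows the strategy of \cite[Theorem~1.1]{HMP} in $B(H)$, but the final passage from similarity to unitary equivalence is now supplied by Theorem~\ref{t:siml imp unit equiv} in the von Neumann algebraic setting. By Lemma~\ref{lem:two sums}, it suffices to consider proper $C^*$-convex combinations of two elements, so I would begin with $a = t_1^* a_1 t_1 + t_2^* a_2 t_2$, where $a_1, a_2 \in \cM_1$ and $t_1, t_2 \in \cM$ are invertible with $t_1^* t_1 + t_2^* t_2 = \bold{1}$.

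First, assume $a$ is an isometry. The plan is to package the coefficients as a column operator $T \colon H \to H \oplus H$ given by $Tx = (t_1 x, t_2 x)$, and the $a_i$'s as the block-diagonal contraction $A = \begin{pmatrix} a_1 & 0 \\ 0 & a_2 \end{pmatrix}$ on $H \oplus H$, so that $T^* T = \bold{1}$, $\|A\| \leq 1$, and $a = T^* A T$. For every $x \in H$, the chain
\[
\|x\| = \|a x\| = \|T^* A T x\| \leq \|A T x\| \leq \|T x\| = \|x\|
\]
must collapse into equalities. Expanding $\|ATx\|^2 = \|Tx\|^2$ gives $\|a_1 t_1 x\|^2 + \|a_2 t_2 x\|^2 = \|t_1 x\|^2 + \|t_2 x\|^2$; since each $a_i$ is a contraction, this forces $\|a_i t_i x\| = \|t_i x\|$ separately for $i = 1, 2$, and invertibility of $t_i$ then yields that each $a_i$ is an isometry. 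The equality $\|T^* (ATx)\| = \|ATx\|$ combined with $T^*T = \bold{1}$ forces $ATx \in \range T$ for all $x$; since $T$ is injective and $T^*AT = a$, this unpacks to $AT = Ta$, i.e., $a_i t_i = t_i a$. Therefore $a_i = t_i a t_i^{-1}$, exhibiting $a$ and $a_i$ as similar isometries inside $\cM$. Theorem~\ref{t:siml imp unit equiv} then provides unitaries $u_i \in \cM$ with $a_i = u_i^* a u_i$.

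For the coisometry case, taking adjoints in $a = t_1^* a_1 t_1 + t_2^* a_2 t_2$ yields the proper $C^*$-convex decomposition $a^* = t_1^* a_1^* t_1 + t_2^* a_2^* t_2$ with the same coefficients. Since $a^*$ is an isometry, the previous case supplies unitaries $u_i \in \cM$ with $a_i^* = u_i^* a^* u_i$, and taking adjoints once more gives $a_i = u_i^* a u_i$, completing the proof.

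The main obstacle is the step in which one squeezes each $a_i$ into being an isometry similar to $a$ via the invertible $t_i \in \cM$; everything hinges on the norm-equality chain above actually collapsing, and on recognising that the resulting similarity takes place inside $\cM$ itself. Once this is in place, the upgrade to unitary equivalence is exactly what Theorem~\ref{t:siml imp unit equiv} was designed for, which is why the Wold decomposition machinery developed in Section~\ref{sec:C*-ext} replaces the direct polar-decomposition argument available in $B(H)$.
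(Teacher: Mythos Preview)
Your proposal is correct and follows essentially the same approach as the paper: reduce to two summands via Lemma~\ref{lem:two sums}, package the data as a column isometry $T$ and a block-diagonal contraction $A$, force equalities to conclude that each $a_i$ is an isometry similar to $a$ via $t_i$, and then invoke Theorem~\ref{t:siml imp unit equiv}. The only cosmetic differences are that the paper phrases the squeeze as an operator inequality $\bold{1}=v^*v\le t^*a^*at\le\bold{1}$ rather than your pointwise norm chain, and it cites Douglas's range-inclusion theorem to obtain $AT=Ta$ where you argue directly from $T^*T=\bold{1}$ and injectivity of $T$; your treatment of the coisometry case by taking adjoints is exactly the ``similarly'' the paper leaves implicit.
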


\begin{proof}
Let $v\in \cM_1$ be an isometry.
Using the alternate characterization of a $C^*$-extreme point given in  Lemma \ref{lem:two sums},
suppose 
    \[
    v= t_1^*a_1t_1+t_2^*a_2t_2,
    \]
    where $a_1, a_2\in \cM_1$ and $t_1, t_2$ are invertible elements in $\cM$ such that $\sum_{i=1}^2t_i^*t_i=\bold{1}$. 
Define $t: H \to H \oplus H$ via $t\xi= (t_1\xi, t_2 \xi)$ for each $\xi \in H$. Then $t^*t= t_1^*t_1+t_2^*t_2=\bold{1}$. Thus, $t$ is an isometry. Let $p=tt^*$ be the projection onto $\range(t)$. Let $a=\begin{bmatrix}
    a_1 & 0\\
    0 & a_2
\end{bmatrix}$. Then $t^*at= t_1^*a_1t_1+t_2^*a_2t_2=v$. Now $v^*v= (t^*at)^*(t^*at)= t^*a^*tt^*at=t^*a^*pat$. Since $0\leq p\leq \bold{1}$, we have $a^*pa\leq a^*a$. Therefore,
\begin{equation}
\label{eq:for v}
v^*v= t^*a^*pat \leq t^*a^*at.
\end{equation}
Note that 
\begin{equation}
\label{eq:for-t-and-a}
    t^*a^*at= t_1^*a_1^*a_1t_1+t_2^*a_2^*a_2t_2.
    \end{equation}
    Since $\|a_i\|\leq 1$, we get $0\leq a_i^*a_i\leq \bold{1}$. Thus,
    \begin{equation}
    \label{eq:equality}
      t_1^*a_1^*a_1t_1+t_2^*a_2^*a_2t_2 \leq t_1^*t_1+t_2^*t_2=\bold{1}.  
    \end{equation}

    Combine Equations (\ref{eq:for v}), (\ref{eq:for-t-and-a}) and (\ref{eq:equality}), we obtain
    \begin{equation*}
        \bold{1}= v^*v\leq  t_1^*a_1^*a_1t_1+t_2^*a_2^*a_2t_2 \leq \bold{1}.
    \end{equation*}

So, equality holds throughout, and in particular, $t_1^*a_1^*a_1t_1+t_2^*a_2^*a_2t_2= \bold{1}= t_1^*t_1+t_2^*t_2$. This gives 
\begin{equation}
\label{eq:positive}
    t_1^*(\bold{1}-a_1^*a_1)t_1+  t_2^*(\bold{1}-a_2^*a_2)t_2=0
\end{equation}

Observe that each summand of the above equation (\ref{eq:positive}) is positive. Hence, each summand has to be zero, that is, $t_i^*(\bold{1}-a_i^*a_i)t_i=0$ for $i=1,2$. Since $t_i$'s are invertible, it follows that $\bold{1}-a_i^*a_i=0$, that is, $a_i^*a_i=\bold{1}$ for $i=1,2$. Thus, $a_1$ and $a_2$ are isometries in $\cM$. 

Now we show that $a_1$ and $a_2$ are similar to $v$. 
Again, using Equations (\ref{eq:for v}), (\ref{eq:for-t-and-a}) and (\ref{eq:equality}), one has 
$t^*a^*pat=t^*a^*at$, that is, $t^*a^*(\bold{1}-p)at=0$. Note that $a^*(\bold{1}-p)a \geq 0$ and 
\[
0=t^*a^*(\bold{1}-p)at= ((\bold{1}-p)^{1/2}at)^* ((\bold{1}-p)^{1/2}at).
\]
Thus, $((\bold{1}-p)^{1/2}at)=0$. This gives $(\bold{1}-p)at=0$, that is, $at=pat = tt^*at$. Therefore $\range(at)\subseteq \range(t)$. Now, by 
\cite[Theorem~1]{Dou}, there exists an operator $b\in B(H)$ such that $at=tb$. Thus, 
\[
t^*at=t^*tb=b.
\]
But $t^*at=v$, hence $v=b$. So, we have $at=tv$, that is, $(a_1t_1, a_2t_2)=(t_1v,t_2v)$. In particular, $a_1t_1=t_1v$ and $a_2t_2=t_2v$. Equivalently, 
\[
a_1=t_1vt_1^{-1}~~~ \text{and} ~~~  a_2=t_2vt_2^{-1}.
\]
Hence, $a_1$ and $a_2$ are similar to $v$.   
  Now using Theorem \ref{t:siml imp unit equiv}, there exist unitaries $u_1$ and $u_2$ in $\cM$ such that $a_1= u_1vu_1^*$ and  $a_2= u_2vu_2^*$. Therefore, $v$ is a $C^*$-extreme point of $\cM_1$. Similarly, one can show that every coisometry is a $C^*$-extreme point of $\cM_1$.
 This completes the proof.
  \end{proof}


Recall that a Banach space $X$ is \textbf{uniformly convex} if given $\epsilon >0$ there is a $\delta >0$ such that whenever $\|x\|=1=\|y\|$ and $\|x-y\| \geq \epsilon$, then $\|x+y\|\leq 2(1-\delta)$. Moreover, any uniformly convex space is always a reflexive space. See \cite{D}.
For $1<p<\infty$, $L_p(\mu)$ spaces are well-known examples of uniformly convex spaces.
 Now we prove a geometric analogue of 
 Theorem \ref{t:iso-imp-C*ext-in-vN} in the context of strongly extreme points on a uniformly convex space $X$. We recall that $T \in B(X)$ is a coisometry if $T^\ast \in B(X^\ast)$ is an isometry.
  \begin{theorem}
     \label{t:st-ext}
      Let $X$ be a uniformly convex space. Let $T \in B(X)_1$ be an isometry. Then $T$ is a strongly extreme point of $B(X)_1$. Similarly, if $X^*$ is uniformly convex, then every coisometry in $B(X)$ is also a strongly extreme point of $B(X)_1$.
 \end{theorem}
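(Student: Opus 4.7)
The plan is to reduce strong extremality of an isometric $T$ to a pointwise application of uniform convexity on the unit sphere of $X$. Suppose $\{T_n\}, \{S_n\} \subset B(X)_1$ satisfy $\tfrac{T_n+S_n}{2} \to T$ in operator norm; I want to conclude $\|T_n - S_n\| \to 0$. Given $\epsilon > 0$, uniform convexity of $X$ furnishes a $\delta > 0$ such that whenever $u, v$ lie in the closed unit ball of $X$ with $\|\tfrac{u+v}{2}\| > 1 - \delta$, one has $\|u - v\| < \epsilon$.

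The key step is to combine this modulus with the fact that $T$ is isometric. Choose $N$ large enough that $\|\tfrac{T_n+S_n}{2} - T\| < \delta$ for all $n \ge N$. For any unit vector $x \in X$, the isometric property gives $\|Tx\| = 1$, so by the reverse triangle inequality $\|\tfrac{T_n x + S_n x}{2}\| > 1 - \delta$. Applying the modulus to $u = T_n x$ and $v = S_n x$ (both of norm at most one) yields $\|T_n x - S_n x\| < \epsilon$, and since the estimate is uniform over the unit sphere, taking a supremum gives $\|T_n - S_n\| \le \epsilon$ for all $n \ge N$. Hence $T$ is strongly extreme.

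For the coisometry statement I would argue by duality. If $T$ is a coisometry, then $T^\ast \in B(X^\ast)$ is an isometry, and $X^\ast$ is uniformly convex by hypothesis. The convergence $\tfrac{T_n+S_n}{2} \to T$ passes to $\tfrac{T_n^\ast + S_n^\ast}{2} \to T^\ast$ in operator norm (since adjoints are isometric on the operator norm), so the first part applied inside $B(X^\ast)$ forces $\|T_n^\ast - S_n^\ast\| \to 0$, and hence $\|T_n - S_n\| \to 0$.

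The only technical point I expect to need to handle carefully is that the modulus of uniform convexity is usually stated for vectors on the unit sphere, whereas $T_n x, S_n x$ merely lie in the closed unit ball. This is a standard extension (one normalizes and controls the perturbation), but it should be invoked explicitly, since without it the pointwise estimate cannot be applied. Beyond that, the proof is essentially a transcription of uniform convexity through the isometric rigidity of $T$ on the sphere.
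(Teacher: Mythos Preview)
Your proof is correct and follows essentially the same approach as the paper: fix a unit vector, use that $T$ is isometric to force $\|\tfrac{T_n x + S_n x}{2}\|$ close to $1$, and then invoke uniform convexity to get $\|T_n x - S_n x\|$ small uniformly in $x$; the coisometry case is handled by dualizing via $S \mapsto S^\ast$. Your write-up is in fact more explicit than the paper's (which simply asserts the uniform estimate), and your remark about extending the modulus of convexity from the sphere to the closed ball is a fair caveat the paper omits.
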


  \begin{proof}
      Let $R_n$ and $S_n$ be two sequences in $B(X)$ such that $\frac{1}{2}(R_n+S_n) \to T$. Let $\xi\in X$ be such that $\|\xi\|=1$. Since $T$ is an isometry and $\|\xi\|=1$, it follows that $\|T\xi\|=1$. Thus $T\xi$ is a linear extreme point of $X_1$.
      Now $T(\xi)= \lim_{n \to \infty}\frac{1}{2}(R_n(\xi)+S_n(\xi))$. Since $X$ is uniformly convex, it is easy to see that $\|R_n(\xi)-S_n(\xi)\| \to 0$ uniformly on all $\xi\in X$ with $\|\xi\|=1$. Thus $\|R_n-S_n\| \to 0$. Hence, $T$ is a strongly extreme point of  $B(X)_1$. In the case, when $X^*$ is uniformly convex, the similar result for coisometry is easy to see using the fact that the map  $S \mapsto S^*$ is a surjective isometry from $B(X)$ to $B(X^*)$.
       This completes the proof.
  \end{proof}

  \begin{remark}
  \label{rem:st.ext-indpt-T}
     Note that strong extremality depends only on the uniform convexity of 
$X$, and not on the specific operator 
$T$ we are using.
  \end{remark}

Next set of results get substantially strengthened by Theorem \ref{t:B} and are included here only as an illustration of the techniques developed here.
  \begin{corollary}     \label{c:st.ext-vN}
      Let $\cM \subset B(H)$ be a von Neumann algebra. Isometries and coisometries are strongly extreme points of $\cM_1$.
  \end{corollary}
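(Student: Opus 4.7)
The plan is to reduce the corollary to Theorem \ref{t:st-ext} applied to the uniformly convex Banach space $X = H$. Since $H$ is a Hilbert space, both $H$ and its dual (identified with $H$ via Riesz representation) are uniformly convex, so Theorem \ref{t:st-ext} immediately yields that every isometry and every coisometry in $B(H)$ is a strongly extreme point of $B(H)_1$.

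To transfer this conclusion from $B(H)_1$ to $\cM_1$, I would invoke the hereditary property of strong extremality already recorded in the introduction: if $x \in X_1$ is a strongly extreme point and $x$ lies in a closed subspace $Y \subseteq X$, then $x$ is a strongly extreme point of $Y_1$. Applying this with $X = B(H)$ and $Y = \cM$ (which is norm-closed in $B(H)$ by the convention adopted in the paper), it follows that every isometry or coisometry $v \in \cM$ is, a fortiori, an isometry or coisometry of $B(H)$, hence strongly extreme in $B(H)_1$, and therefore strongly extreme in $\cM_1$ as well.

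There is essentially no obstacle here: the entire argument is a one-line application of Theorem \ref{t:st-ext} combined with the standard fact that strong extremality passes to closed subspaces. The only verification worth mentioning is that any sequence in $\cM_1$ is automatically a sequence in $B(H)_1$, which is immediate from $\cM \subseteq B(H)$, so the defining condition of strong extremality in the larger ball restricts directly to give the defining condition in the smaller one.
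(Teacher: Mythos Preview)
Your proposal is correct and matches the paper's own proof, which simply reads ``Follows immediately from Theorem~\ref{t:st-ext}.'' You have spelled out exactly the two ingredients the paper leaves implicit: applying Theorem~\ref{t:st-ext} with $X=H$ (a Hilbert space, hence uniformly convex along with its dual) to get strong extremality in $B(H)_1$, and then invoking the hereditary property of strong extremality under passage to closed subspaces, stated in the introduction, to descend to $\cM_1$.
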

   \begin{proof}
       Follows immediately from Theorem \ref{t:st-ext}.
   \end{proof}

  \begin{proposition}
      \label{p:st.ext-in-d.sum}
  Let $\{H_\alpha\}_{\alpha \in \Delta}$ be a family of Hilbert spaces.
  Consider $\oplus_\alpha B(H_\alpha)$ $(\ell_\infty$-sum).
      Let 
      $T  $ be a linear extreme point of $(\oplus B(H_\alpha))_1$, then $T$ is a strongly extreme point of $(\oplus B(H_\alpha))_1$. Moreover, if $\cM$ is a von Neumann algebra whose predual has the Radon-Nikodym Property (RNP); then every linear extreme point of $\cM_1$ is a strongly extreme point of $\cM_1$.
  \end{proposition}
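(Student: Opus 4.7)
The plan is to exploit the fact that every Hilbert space $H_\alpha$ is uniformly convex with a common modulus of convexity --- the modulus arising from the parallelogram identity does not depend on the particular $H_\alpha$ --- so that the pointwise estimates underlying Theorem~\ref{t:st-ext} descend to the $\ell_\infty$-sum norm uniformly in $\alpha$. First I would write $T=(T_\alpha)_{\alpha\in\Delta}$ and apply Theorem~\ref{t:lin-ext}: being a partial isometry is a coordinatewise condition, and testing $(\mathbf{1}-T^*T)A(\mathbf{1}-TT^*)=0$ against elements $A$ supported in a single coordinate $\beta$ forces $(\mathbf{1}-T_\beta^*T_\beta)A_\beta(\mathbf{1}-T_\beta T_\beta^*)=0$ for every $A_\beta\in B(H_\beta)$. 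Hence each $T_\alpha$ is a linear extreme point of $B(H_\alpha)_1$, which in $B(H_\alpha)$ forces $T_\alpha$ to be either an isometry or a coisometry.

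Next, let $R_n,S_n\in(\oplus_\alpha B(H_\alpha))_1$ with $\eta_n:=\|\tfrac12(R_n+S_n)-T\|_\infty\to 0$. For any $\alpha$ with $T_\alpha$ an isometry and any unit vector $\xi\in H_\alpha$, the parallelogram identity yields
\[
\|(R_n)_\alpha\xi-(S_n)_\alpha\xi\|^2+\|(R_n)_\alpha\xi+(S_n)_\alpha\xi\|^2=2\|(R_n)_\alpha\xi\|^2+2\|(S_n)_\alpha\xi\|^2\le 4,
\]
and since $\|T_\alpha\xi\|=1$, the triangle inequality gives $\|(R_n)_\alpha\xi+(S_n)_\alpha\xi\|\ge 2-2\eta_n$, hence $\|(R_n)_\alpha\xi-(S_n)_\alpha\xi\|^2\le 8\eta_n$. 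This bound is \emph{uniform} in $\xi$ and in $\alpha$, which is the crux of the argument. For a coordinate with $T_\alpha$ a (non-unitary) coisometry, I would pass to adjoints: $S\mapsto S^*$ is an isometry on $B(H_\alpha)$, so the same hypothesis holds for the adjoints, and applying the parallelogram estimate to $(R_n)_\alpha^*,(S_n)_\alpha^*,T_\alpha^*$ gives the analogous uniform bound on those coordinates. Combining, $\|R_n-S_n\|_\infty\le\sqrt{8\eta_n}\to 0$, so $T$ is strongly extreme.

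For the second assertion I would invoke the classical theorem (due to Chu) that a von Neumann algebra $\cM$ whose predual has the Radon--Nikodym property is atomic, i.e.\ $*$-isomorphic to $\oplus_\alpha B(H_\alpha)$ for some family $\{H_\alpha\}$ of Hilbert spaces; the first part then applies directly, with the isomorphism transporting the norm and hence extremality. The main obstacle here is precisely the uniformity of the strong-extremality estimate across coordinates in the $\ell_\infty$-sum, and it is resolved by the fact that all Hilbert spaces share the same modulus of uniform convexity encoded in the parallelogram identity.
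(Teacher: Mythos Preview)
Your proposal is correct and follows essentially the same approach as the paper: reduce to coordinates to see each $T_\alpha$ is an isometry or coisometry, use the common Hilbert-space modulus of convexity (via the parallelogram identity) to get a bound uniform in $\alpha$, and then invoke Chu's theorem for the RNP part. The paper packages the uniform-convexity step by citing Theorem~\ref{t:st-ext} together with Remark~\ref{rem:st.ext-indpt-T}, whereas you spell out the parallelogram computation explicitly; the content is the same.
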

  
   \begin{proof}
   If $T $ is a linear extreme point of $(\oplus B(H_\alpha))_1$, then it is easy to see that each $T(\alpha)$ is a linear extreme point of $B(H_\alpha)_1$. More precisely, each $T(\alpha)$ is an isometry or a coisometry in $B(H_\alpha)$. By  
  Theorem \ref{t:st-ext} and Remark \ref{rem:st.ext-indpt-T}, it is immediate that $T$ is a strongly extreme point of $(\oplus B(H_\alpha))_1$.  For the case, when $\cM$ is a von Neumann algebra whose predual has RNP, then by \cite[Theorem]{C}, we have $\cM= \oplus_\alpha B(H_\alpha)$ ($\ell_\infty$-sum). Applying the same argument as above, we conclude that every linear extreme point of $\cM_1$ is also a strongly extreme point of $\cM_1$.
   \end{proof}
  
Below, we extend Proposition \ref{p:st.ext-in-d.sum} to the $C^*$-extreme case. Specifically, we show that an element in the $\ell_\infty$-direct sum of two $C^*$-algebras is $C^*$-extreme if and only if it is $C^*$-extreme in each coordinate. Since we deal with several $C^*$-algebras in the next theorem, we index their identities by the respective $C^*$-algebras.
  \begin{theorem}
\label{t:componentwise C*ext}
 Let $\cA, \cB$ and $\cC$ be  $C^*$-algebras such that $\cC=\cA \oplus \cB$ ($\ell_\infty$-sum). Then  $x=(a,b)$ 
 is a $C^*$-extreme point of 
 $\cC_1$ if and only if $a$ is a $C^*$-extreme point of 
 $\cA_1$ and $b$ is a $C^*$-extreme point of 
 $\cB_1$.
  \end{theorem}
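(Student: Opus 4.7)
The plan is to prove both directions using Lemma~\ref{lem:two sums}, which reduces $C^*$-extremality to the two-summand case. The central observation throughout is that the algebraic structure of the $\ell_\infty$-direct sum is componentwise: an element $(t,s)\in\cC$ is invertible iff $t\in\cA$ and $s\in\cB$ are each invertible; a pair is unitary iff each coordinate is; and the norm on $\cC$ is the max of the coordinate norms, so $\cC_1=\cA_1\times\cB_1$.

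For the easier direction $(\Leftarrow)$, suppose $a$ is $C^*$-extreme in $\cA_1$ and $b$ is $C^*$-extreme in $\cB_1$. Take any proper $C^*$-convex combination $(a,b)=\sum_{i=1}^k T_i^*(a_i,b_i)T_i$ in $\cC_1$ with $T_i=(t_i,s_i)$ invertible and $\sum T_i^*T_i=\bold{1}_\cC$. Projecting onto each coordinate yields $a=\sum t_i^* a_i t_i$ with $t_i\in\cA$ invertible and $\sum t_i^*t_i=\bold{1}_\cA$, and similarly a proper $C^*$-convex decomposition of $b$ in $\cB_1$. By hypothesis there exist unitaries $u_i\in\cA$ and $v_i\in\cB$ with $a_i=u_i^*au_i$ and $b_i=v_i^*bv_i$. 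Then $U_i:=(u_i,v_i)$ is a unitary in $\cC$ with $(a_i,b_i)=U_i^*(a,b)U_i$, proving $(a,b)$ is $C^*$-extreme.

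For the forward direction $(\Rightarrow)$, assume $(a,b)$ is $C^*$-extreme in $\cC_1$ and fix a two-summand proper $C^*$-convex decomposition $a=t_1^*a_1t_1+t_2^*a_2t_2$ in $\cA_1$ with $t_1,t_2\in\cA$ invertible and $t_1^*t_1+t_2^*t_2=\bold{1}_\cA$. The idea is to lift to $\cC$ by attaching an innocuous $\cB$-component: set $T_i:=(t_i,\tfrac{1}{\sqrt{2}}\bold{1}_\cB)\in\cC$, which are invertible, satisfy $T_1^*T_1+T_2^*T_2=\bold{1}_\cC$, and give
\[
T_1^*(a_1,b)T_1+T_2^*(a_2,b)T_2=(a,\,\tfrac{1}{2}b+\tfrac{1}{2}b)=(a,b).
\]
Since $(a,b)$ is $C^*$-extreme in $\cC_1$, there exist unitaries $U_i=(u_i,v_i)\in\cC$ with $(a_i,b)=U_i^*(a,b)U_i$; reading off the $\cA$-coordinate gives $a_i=u_i^*au_i$, so $a$ is $C^*$-extreme by Lemma~\ref{lem:two sums}. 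The argument for $b$ is symmetric, with the auxiliary coefficient $\tfrac{1}{\sqrt{2}}\bold{1}_\cA$ placed in the $\cA$-slot.

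There is no serious obstacle; the only subtle point is the \emph{lift} in the forward direction, where the auxiliary coefficients must simultaneously be invertible and satisfy the normalization $\sum T_i^*T_i=\bold{1}_\cC$. The uniform choice $\tfrac{1}{\sqrt{2}}\bold{1}$ handles the case $n=2$ cleanly, which is exactly why invoking Lemma~\ref{lem:two sums} to restrict to two summands is convenient; for general $n$ one would need scalars $\alpha_i>0$ with $\sum\alpha_i^2=1$, and verifying properness (invertibility of all coefficients) is immediate. The rest is bookkeeping about componentwise invertibility and unitarity in $\cA\oplus\cB$.
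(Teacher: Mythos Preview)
Your proposal is correct and follows essentially the same approach as the paper's proof: the forward direction lifts a two-term decomposition of $a$ to $\cC$ via the auxiliary coefficients $(t_i,\tfrac{1}{\sqrt{2}}\bold{1}_\cB)$ and reads off unitaries componentwise, while the converse projects a proper $C^*$-convex combination in $\cC$ onto each coordinate. The only cosmetic difference is that you explicitly invoke Lemma~\ref{lem:two sums} to justify restricting to two summands (and handle the converse for general $k$), whereas the paper works with two summands throughout and leaves the appeal to Lemma~\ref{lem:two sums} implicit.
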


  \begin{proof}
 Let $x=(a,b)$ be a $C^*$-extreme point of $\cC_1$.  Let $a=t_1^*a_1t_1+t_2^*a_2t_2$ where $a_1, a_2\in \cA_1$ and $t_1,t_2 \in \cA$ are invertible such that $t_1^*t_1+t_2^*t_2=\bold{1}_\cA$. Now $x=(a,b)$ can be written as 
 \begin{equation}
 \label{eq:properC_comb}
     (a,b)= \left(t_1^*,\frac{1}{\sqrt{2}}\bold{1}_\cB\right)(a_1,b) \left(t_1,\frac{1}{\sqrt{2}}\bold{1}_\cB\right)+ \left(t_2^*,\frac{1}{\sqrt{2}}\bold{1}_\cB\right)(a_2,b) \left(t_2,\frac{1}{\sqrt{2}}\bold{1}_\cB\right).
  \end{equation}
  Observe that Equation \ref{eq:properC_comb} represents $x$ as  a proper $C^*$-convex combination of $(a_1,b)$, $(a_2,b) \in \cC_1$. Since $(a,b)$ is a $C^*$-extreme point of $\cC_1$, it follows that there exist unitaries $u=(u_1,u_2),v=(v_1,v_2) \in \cC$ such that 
  \[
  (a_1,b)= (u_1^*,u_2^*)(a,b) (u_1,u_2),~~ \text{and}~~ (a_2,b)= (v_1^*,v_2^*)(a,b) (v_1,v_2).
  \]
  Clearly, $u_1$ and  $u_2$ are unitaries in $\cA$ and $v_1$ and $v_2$ are unitaries in $\cB$.
 Thus $a_1=u_1^*au_1$ and $a_2=v_1^*av_1$. Hence $a$ is a $C^*$-extreme point of $\cA_1$. Similarly, one can show that $b$ is a $C^*$-extreme point of $\cB_1$. 

 Conversely, let $a$ and $b$ be $C^*$-extreme points of $\cA_1$ and $\cB_1$ respectively. Let 
 \[
 (a,b)= (s_1^*,s_2^*)(a_1,b_1)(s_1,s_2)+ (t_1^*,t_2^*)(a_2,b_2)(t_1,t_2),
 \] 
 where $(a_1,b_1),(a_2,b_2)\in \cC_1$ and $(s_1,s_2), (t_1,t_2)$ are invertible in $\cC$ such that $(s_1,s_2)^*(s_1,s_2)+(t_1,t_2)^*(t_1,t_2)=\bold{1}_\cC$.
Thus 
\[
a=s_1^*a_1s_1+t_1^*a_2t_1~~ \text{and} ~~ b=s_2^*b_1s_2+t_2^*b_2t_2,
\]
 where $s_1,t_1 \in \cA$ and $s_2,t_2\in \cB$ are invertible satisfying $s_1^*s_1+t_1^*t_1=\bold{1}_\cA$ and $s_2^*s_2+t_2^*t_2=\bold{1}_\cB$, and where $a_1,a_2\in \cA_1$ and $b_1, b_2 \in \cB_1$. Since $a$ and $b$ are $C^*$-extreme points of $\cA_1$ and $\cB_1$, respectively, it follows that there exist unitaries $u_1,u_2\in \cA$ and $v_1, v_2\in \cB$ such that $a_i=u_i^*au_i$ and $b_i=v_i^*bv_i$ for $i=1,2$. Define $w_1=(u_1,v_1), w_2=(u_2,v_2)\in \cC$. Clearly, $w_1, w_2 \in \cC$ are unitaries and  $(a_i,b_i)= w_i^*(a,b)w_i$ for $i=1,2$. Thus $(a,b)$ is a $C^*$-extreme point of $\cC_1$. This completes the proof.
 \end{proof}

The following corollary is a  continuous version of Theorem \ref{t:componentwise C*ext}. Recall that $C(\Omega,\cA)$ denotes the unital $C^*$-algebra, where $\Omega$ is a compact Hausdorff space and $\cA$ is a $C^*$-algebra.
 \begin{corollary}
     \label{c:C_extonisopt}
 Let $f$ be a $C^*$-extreme point
 of 
 $C(\Omega,\cA)_1$. Let $\omega_0\in \Omega$ be an isolated point. Then, $f(\omega_0)$ is a $C^*$-extreme point of 
 $\cA_1$.    
 \end{corollary}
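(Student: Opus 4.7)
The plan is to reduce the corollary to Theorem~\ref{t:componentwise C*ext} by exploiting the $\ell_\infty$-direct sum decomposition of $C(\Omega,\cA)$ induced by an isolated point. Since $\omega_0$ is isolated, the singleton $\{\omega_0\}$ is clopen in $\Omega$, so its complement $\Omega' := \Omega \setminus \{\omega_0\}$ is closed in $\Omega$ and hence compact Hausdorff in its own right.

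Using this decomposition, I would set up the natural $C^*$-algebra isomorphism
\[
\Phi : C(\Omega,\cA) \longrightarrow \cA \oplus C(\Omega',\cA), \qquad \Phi(g) = \bigl(g(\omega_0),\, g|_{\Omega'}\bigr),
\]
where the right hand side is endowed with the $\ell_\infty$-sum structure. One checks readily that $\Phi$ is a unital $*$-isomorphism and that it is isometric; continuity of $g|_{\Omega'}$ is automatic, and conversely any pair $(a,h) \in \cA \oplus C(\Omega',\cA)$ glues (via the clopen decomposition) to a continuous $\cA$-valued function on $\Omega$.

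Since $*$-isomorphisms of unital $C^*$-algebras preserve the $C^*$-convex structure (they send proper $C^*$-convex combinations to proper $C^*$-convex combinations and unitary equivalences to unitary equivalences), $\Phi(f) = (f(\omega_0),\, f|_{\Omega'})$ is a $C^*$-extreme point of $\bigl(\cA \oplus C(\Omega',\cA)\bigr)_1$. By Theorem~\ref{t:componentwise C*ext}, each coordinate is a $C^*$-extreme point of the respective factor; in particular, $f(\omega_0)$ is a $C^*$-extreme point of $\cA_1$, which is exactly what we need.

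I do not anticipate any real obstacle here; the only point requiring a moment of care is verifying that $\Phi$ really is a surjective $*$-isomorphism, which uses the isolatedness of $\omega_0$ in an essential way (without it, a pair $(a,h)$ need not extend continuously across $\omega_0$). Everything else is a direct appeal to Theorem~\ref{t:componentwise C*ext}.
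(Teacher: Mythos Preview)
Your proposal is correct and follows essentially the same route as the paper: both arguments use the $*$-isomorphism $C(\Omega,\cA)\cong \cA \oplus C(\Omega\setminus\{\omega_0\},\cA)$ coming from the clopen decomposition at the isolated point, and then invoke Theorem~\ref{t:componentwise C*ext} to conclude that the $\omega_0$-coordinate is $C^*$-extreme. The only differences are cosmetic (order of the summands, and you spell out a bit more explicitly why $*$-isomorphisms preserve $C^*$-extremality).
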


 \begin{proof}
 First, observe that $C(\Omega,\cA)$ is $*$-isomorphic to $C(\Omega_0,\cA) \oplus \cA$ ($\ell_\infty$-sum), where $\Omega_0=\Omega\setminus\{\omega_0\}$, under the  isomorphism $f \mapsto (f|_{\Omega_0}, f(\omega_0))$.  
 Let $f\in C(\Omega,\cA)_1$ be $C^*$-extreme, then by the above $*$-isomorphism, it follows that the corresponding element $(f|_{\Omega_0}, f(\omega_0)) \in (C(\Omega_0,\cA) \oplus_ \cA)_1$ 
 is $C^*$-extreme. Now using Theorem \ref{t:componentwise C*ext}, we have $f|_{\Omega_0}$ and $f(\omega_0)$ are $C^*$-extreme points of $C(\Omega_0,\cA)_1$ and $\cA_1$, respectively. This completes the proof.
 \end{proof}

Later,
in Proposition \ref{c:vN}, we show that if the set of isolated points is dense in $\Omega$ and $\mathcal A$ is a von Neumann algebra $\mathcal M$, then any $C^{*}$-extreme point $f$ of $C(\Omega,\mathcal M)_{1}$ satisfies that $f(\omega)$ is a $C^{*}$-extreme point of $\mathcal M_{1}$ for every $\omega \in \Omega$.


In order to prove Theorem~~ \ref{t:A}, we need the following lemma which is perhaps well-known. 
\begin{lemma}
    \label{lem:cont-imp-ave-of-p.i}
Let $x\in \cM_1$. There exist partial isometries $v_1, v_2 \in \cM_1$ such that
\begin{equation*}
    x= \frac{v_1+v_2}{2}.
\end{equation*}
\end{lemma}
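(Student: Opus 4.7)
The plan is to combine the polar decomposition in $\cM$ with a Halmos-style dilation of the positive part of $x$ by two unitaries. Since $\cM$ is a von Neumann algebra, we may write $x = u|x|$ with $u \in \cM$ a partial isometry whose initial projection $p := u^*u$ is the support projection of $|x|$.

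The positive contraction $|x|$ has spectrum contained in $[0,1]$, so continuous functional calculus applied in $\cM$ produces the element
\[
a := |x| + i\sqrt{\bold{1} - |x|^2} \in \cM.
\]
A direct computation gives $a^*a = aa^* = \bold{1}$, so $a$ is unitary, and $\tfrac{1}{2}(a + a^*) = |x|$. Setting $v_1 := ua$ and $v_2 := ua^*$ therefore yields
\[
x = u|x| = \frac{v_1 + v_2}{2},
\]
and the norm bound $\|v_i\| \leq 1$ is automatic since $u$ is a contraction and $a$ is unitary.

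It remains to verify that $v_1$ and $v_2$ are partial isometries. Since $p$ is the support projection of $|x|$, it lies in the abelian von Neumann algebra generated by $|x|$ and $\bold{1}$, and therefore commutes with the element $a$ obtained from $|x|$ via functional calculus. Consequently
\[
v_1^*v_1 = a^* u^* u\, a = a^* p a = p\, a^*a = p,
\]
which is a projection; the same computation applies to $v_2$. The only mildly delicate step in the argument is the commutation $pa = ap$, which follows from the spectral theorem applied to $|x|$; everything else is a routine functional-calculus manipulation.
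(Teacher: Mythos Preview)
Your proof is correct and follows essentially the same route as the paper: polar decomposition $x = u|x|$ followed by the Halmos trick $|x| = \tfrac{1}{2}\bigl(a + a^*\bigr)$ with $a = |x| + i\sqrt{\bold{1}-|x|^2}$, then setting $v_1 = ua$, $v_2 = ua^*$. You even supply the verification that $v_i^*v_i = p$ via the commutation $pa = ap$, which the paper leaves implicit.
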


    \begin{proof}
 Let $x=v|x|$ be the polar decomposition of $x$, where $v \in \cM$ is a partial isometry whose initial space is $\ker(x)^\perp$ and final space is $\overline{\range(x)}$. 
 Let $u_1= |x|+i\sqrt{\bold{1}-|x|^2}$ and $u_2=|x|-i\sqrt{\bold{1}-|x|^2}$. Since $|x|$ is a contraction, it follows that $u_1$ and $u_2$ are well-defined unitaries in $\cM$, and 
    $|x|=\frac{u_1+u_2}{2}$. Thus, $x= \frac{vu_1+vu_2}{2}$. Define $v_1=vu_1$ and $v_2=vu_2$. Thus, $x= \frac{v_1+v_2}{2}$.  Clearly, each $v_i \in \cM$ is a  partial isometry for $i=1,2$ 
     whose initial space is $\ker(x)^\perp$ and final space is $\overline{\range(x)}$.
     \end{proof}



Now we recall the Comparison Theorem from \cite{KR2}.  We refer to \cite{KR,KR2} for standard terminology, notation, and basic definitions concerning von Neumann algebras.
\begin{theorem} 
    \label{t:comp thm}
    (\cite[Theorem ~6.2.7]{KR2})
    Let $\cM$ be a von Neumann algebra. Let 
    $e$ and $f$ be projections in $\cM$.
There exist central projections $p,q,r \in \cM$ such that $p+q+r=\bold{1}$ with the property that $pe \sim pf$, and, if $q\neq 0$, then $qe \prec qf$. If $r\neq 0$, then $rf \prec re$.
\end{theorem}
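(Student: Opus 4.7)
The plan is to combine a Zorn's lemma argument with properties of central carriers to produce the decomposition in one pass. First I would consider the family $\mathcal{F}$ of triples $(e', f', v)$ with $e' \leq e$, $f' \leq f$ subprojections and $v \in \cM$ a partial isometry satisfying $v^*v = e'$ and $vv^* = f'$, ordered by component-wise inclusion. Chains admit upper bounds via SOT-convergent sums of partial isometries taken along pairwise orthogonal increments, so Zorn yields a maximal element $(e_0, f_0, v_0)$ realizing the largest equivalence $e_0 \sim f_0$ lying inside $e$ and $f$.

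Next I would set $e_1 = e - e_0$ and $f_1 = f - f_0$, and let $z(e_1)$ and $z(f_1)$ denote their central carriers in $\cM$. Maximality forces $z(e_1) \wedge z(f_1) = 0$: otherwise, compressing by the nonzero central projection $z(e_1) \wedge z(f_1)$ and applying the standard fact that two projections in $\cM$ with overlapping central carriers admit nonzero equivalent subprojections would yield a strictly larger element of $\mathcal{F}$, contradicting maximality of $(e_0, f_0, v_0)$. Then I would define $r = z(e_1)$, $q = z(f_1)$, and $p = \bold{1} - q - r$; these are central and mutually orthogonal by the orthogonality just established.

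Finally I would verify the three clauses. Under $p$, both $e_1$ and $f_1$ vanish because $p \perp z(e_1)$ and $p \perp z(f_1)$, so $pe = pe_0 \sim pf_0 = pf$ via the partial isometry $pv_0$. Under $q = z(f_1)$, we get $qe_1 \leq q \cdot z(e_1) = 0$, hence $qe = qe_0 \sim qf_0$, while $qf = qf_0 + f_1 \gneq qf_0$ whenever $f_1 \neq 0$, giving $qe \prec qf$ strictly whenever $q \neq 0$; the case of $r$ is symmetric. The main obstacle is the central-carrier step, namely showing that if $e', f' \in \cM$ are projections with $z(e') z(f') \neq 0$, then there exist nonzero subprojections $e'' \leq e'$ and $f'' \leq f'$ with $e'' \sim f''$. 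This is the structural heart of the argument and requires a halving-style construction within the nonzero central cut-down, but once in hand it immediately gives the orthogonality of central carriers used above and closes the proof.
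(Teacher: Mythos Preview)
The paper does not prove this statement; it is quoted from Kadison--Ringrose as a black box, so your outline has to stand on its own. Your Zorn-plus-central-carrier strategy is the standard route to the \emph{two-part} comparison (a central $P$ with $Pe \precsim Pf$ and $(\bold{1}-P)f \precsim (\bold{1}-P)e$), and that much is sound once the order on $\mathcal{F}$ is made precise as extension of partial isometries. The genuine gap is the passage to \emph{strict} subequivalence. From a maximal $(e_0,f_0,v_0)$ you obtain $qe = qe_0 \sim qf_0 \lneq qf$ and then assert $qe \prec qf$; but $qe \sim qf_0 \lneq qf$ only yields $qe \precsim qf$ and does not rule out $qe \sim qf$. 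Concretely, let $\cM = B(\ell^2)$, $e = \bold{1}$, and $f$ a projection of infinite rank and nonzero corank, and choose any isometry $v_0$ with $v_0 v_0^* = f_0 \lneq f$. The triple $(\bold{1}, f_0, v_0)$ is maximal in the extension order (since $e_0 = \bold{1}$ forces any extension to coincide with $v_0$), so your recipe gives $e_1 = 0$, $f_1 \neq 0$, hence $r = 0$, $q = z(f_1) = \bold{1}$, $p = 0$, and the asserted conclusion $e \prec f$ is false because $e \sim f$. Zorn hands you \emph{a} maximal element, not one with $(e_0,f_0)$ as large as possible in any absolute sense.

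The repair is to isolate the equivalence part first. Let $p$ be the sum of a maximal orthogonal family of central projections $c$ with $ce \sim cf$; countable additivity of Murray--von Neumann equivalence gives $pe \sim pf$, and maximality of the family forces $ce \not\sim cf$ for every nonzero central $c \leq \bold{1}-p$. Your own argument (via $z(e_1)\wedge z(f_1)=0$) already supplies a central $P$ with $Pe \precsim Pf$ and $(\bold{1}-P)f \precsim (\bold{1}-P)e$; now set $q = P(\bold{1}-p)$ and $r = (\bold{1}-P)(\bold{1}-p)$. If $q \neq 0$ then $qe \precsim qf$, while $qe \sim qf$ would force $q \leq p$, impossible; hence $qe \prec qf$, and the case of $r$ is symmetric. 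A secondary point: ``component-wise inclusion'' must include the compatibility $v'\,e' = v$ on the partial isometries, or chains in $\mathcal{F}$ need not have upper bounds (increments of equivalent projections along a chain are not automatically equivalent).
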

Using the above Comparison Theorem,
we have the following important observation for a contraction in a von Neumann algebra.
 \begin{theorem}
     \label{t:comp-thm-on-contr}
 Let $x\in \cM_1$. 
 There exist partial isometries $x_1,x_2 \in \cM$ and central projections $p_1,p_2,p_3 \in \cM$  such that $p_1+p_2+p_3=\bold{1}$,  $x=\frac{x_1+x_2}{2}$,
 and,
 the following holds:
 \begin{enumerate} [(i)]
     \item
     \label{i1}
     either $p_1=0$, or $p_1x_1$ and $p_1x_2$ are unitaries in $p_1\cM$,
     \item  \label{i2} either $p_2=0$, or $p_2x_1$ and $p_2x_2$ are  non-unitary isometries in $p_2\cM$, and,
     \item  \label{i3} either $p_3=0$, or $p_3x_1$ and $p_3x_2$ are non-unitary coisometries in $p_3\cM$.
 \end{enumerate}
 \end{theorem}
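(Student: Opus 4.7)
The plan is to apply Lemma~\ref{lem:cont-imp-ave-of-p.i} first to write $x=(v_1+v_2)/2$, where $v_1,v_2\in\cM$ are partial isometries with a common initial projection $e$ (onto $\ker(x)^\perp$) and a common final projection $f$ (onto $\overline{\range(x)}$); in particular $v_i^*v_i=e$ and $v_iv_i^*=f$ for $i=1,2$. The $v_i$ fail to be unitary precisely through the defect projections $\bold{1}-e$ and $\bold{1}-f$, so the idea is to modify $v_1,v_2$ by adding and subtracting a partial isometry $w$ whose initial and final projections lie below $\bold{1}-e$ and $\bold{1}-f$ respectively: such modifications will preserve the partial isometry property thanks to orthogonality, and the cancelling signs will keep $x_1+x_2=v_1+v_2=2x$ intact.

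To choose the correct $w$ on each piece of $\cM$, I would invoke the Comparison Theorem (Theorem~\ref{t:comp thm}) applied to $\bold{1}-e$ and $\bold{1}-f$. It produces central projections $p_1,p_2,p_3$ with $p_1+p_2+p_3=\bold{1}$ such that $p_1(\bold{1}-e)\sim p_1(\bold{1}-f)$, and (when nonzero) $p_2(\bold{1}-e)\prec p_2(\bold{1}-f)$ and $p_3(\bold{1}-f)\prec p_3(\bold{1}-e)$. I would then choose partial isometries $w_1,w_2,w_3$ witnessing these relations; centrality of each $p_k$ automatically places $w_k$ in $p_k\cM$. Concretely, $w_1^*w_1=p_1(\bold{1}-e)$ and $w_1w_1^*=p_1(\bold{1}-f)$; $w_2^*w_2=p_2(\bold{1}-e)$ with $w_2w_2^*$ a proper subprojection of $p_2(\bold{1}-f)$; and $w_3w_3^*=p_3(\bold{1}-f)$ with $w_3^*w_3$ a proper subprojection of $p_3(\bold{1}-e)$. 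I then define
\[
x_1:=v_1+w_1+w_2+w_3,\qquad x_2:=v_2-w_1-w_2-w_3,
\]
so that $x_1+x_2=2x$ is immediate.

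The remaining verification is componentwise on the central decomposition of $\cM$. The crucial observation will be that every cross term among the summands above vanishes: the $p_k$ are pairwise orthogonal, so $w_jw_k^*=w_j^*w_k=0$ for $j\neq k$, while the initial (respectively final) projection of each $w_k$ is orthogonal to $e$ (respectively $f$), killing products such as $v_i^*w_k$ and $v_iw_k^*$. A direct computation on the $p_2$-piece should then yield $(p_2x_1)^*(p_2x_1)=p_2e+w_2^*w_2=p_2$ while $(p_2x_1)(p_2x_1)^*=p_2f+w_2w_2^*$ is a proper subprojection of $p_2$; so $p_2x_1$ is a non-unitary isometry in $p_2\cM$. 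Analogous calculations on $p_1$ and $p_3$ will show that $p_1x_i$ is a unitary and $p_3x_i$ is a non-unitary coisometry, with the same conclusions for $x_2$. Assembling the three pieces shows that $x_1$ and $x_2$ are partial isometries with the announced structure.

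The main obstacle I anticipate is not conceptual but notational: carefully tracking the supports of $v_i$ and $w_k$ so that all cross terms vanish as claimed. The most delicate point will be translating the strictness in $\prec$ from the Comparison Theorem into the strict non-unitarity of $p_2x_i$ and $p_3x_i$; this rests on the fact that $w_2w_2^*$ (respectively $w_3^*w_3$) is chosen to be a \emph{proper} subprojection of $p_2(\bold{1}-f)$ (respectively $p_3(\bold{1}-e)$), so that $p_2f+w_2w_2^*\neq p_2$ (respectively $p_3e+w_3^*w_3\neq p_3$).
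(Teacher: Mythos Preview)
Your proposal is correct and follows essentially the same route as the paper's proof: apply Lemma~\ref{lem:cont-imp-ave-of-p.i} to obtain $v_1,v_2$, use the Comparison Theorem on the defect projections $\bold{1}-e$ and $\bold{1}-f$ to obtain the central projections $p_1,p_2,p_3$ and the partial isometries $w_1,w_2,w_3$, and set $x_1=v_1+w$, $x_2=v_2-w$ with $w=w_1+w_2+w_3$. The only cosmetic differences are that the paper denotes by $e,f$ the projections onto $\ker(x)$ and $\range(x)^\perp$ (the complements of your $e,f$) and presents the definition of $x_1,x_2$ via a $2\times 2$ block matrix rather than as an algebraic sum; the orthogonality of supports that you identify is exactly what makes the block picture work.
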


 \begin{proof}
 Let $e\in \cM$ be the projection on $\ker(x)$  and $f \in \cM$ be the projection onto ${\range(x)}^\perp$.  Applying
 Theorem \ref{t:comp thm}
 on projections $e$ and $f$, we obtain  central projections $p_1,p_2,p_3$ such that $p_1+p_2+p_3=\bold{1}$ and there exist $w_1 \in p_1\cM, w_2 \in p_2\cM, w_3 \in p_3\cM$ with the properties that 
 \begin{enumerate} [(i)]
     \item either $p_1=0$, or $w_1^*w_1=p_1e$ and $w_1w_1^*=p_1f$,
     \item either $p_2=0$, or $w_2^*w_2=p_2e$ and $w_2w_2^*<p_2f$, and,
     \item either $p_3=0$, or $w_3^*w_3<p_3e$ and $w_3w_3^*=p_3f$.
 \end{enumerate}

For $i=1,2,3$; if $p_i=0$, we take $w_i=0$ . 
 Since $x\in \cM_1$, by Lemma \ref{lem:cont-imp-ave-of-p.i}, there exist partial isometries $v_1,v_2 \in \cM$ whose initial space is $\ker(x)^\perp$ and  final space is $\overline{\range(x)}$, and such that $x=\frac{v_1+v_2}{2}$.
 Note that the matrix decomposition of $v_1,v_2:\ker(x) \oplus \ker(x)^\perp \to {\range(x)}^\perp \oplus \overline{\range(x)}$ are given by
 \begin{equation*}
       v_1= \begin{bmatrix}
         0 & 0\\
         0 & v_1
     \end{bmatrix} ~~~ \text{and} ~~~ v_2=\begin{bmatrix}
         0 & 0\\
         0 & v_2
     \end{bmatrix}.
 \end{equation*}

 Let $w=w_1+w_2+w_3$. Observe that $w\in \cM$ is a partial isometry such that $w^*w \leq e$ and $ww^*\leq f$. 
 Define $x_1,x_2: \ker(x) \oplus \ker(x)^\perp \to {\range(x)}^\perp \oplus \overline{\range(x)}$ as
 \begin{equation}
 \label{eq:C*-ext}
     x_1= \begin{bmatrix}
        w & 0\\
         0 & v_1
     \end{bmatrix} ~~~ \text{and} ~~~ x_2=\begin{bmatrix}
         -w & 0\\
         0 & v_2
     \end{bmatrix}.
 \end{equation}
Clearly, $x=\frac{x_1+x_2}{2}$. 
Recall that  $w_1 \in p_1\cM, w_2 \in p_2\cM$, and $w_3 \in p_3\cM$. Therefore, the matrix representations of $p_1x_1$ and $p_1x_2$ are
\[
p_1x_1=\begin{bmatrix}
         w_1 & 0\\
         0 & p_1v_1
     \end{bmatrix}~~~ \text{and}~~~ p_1x_2=\begin{bmatrix}
         -w_1 & 0\\
         0 & p_1v_2
     \end{bmatrix}.
     \]
Since  $p_1$ is a central projection, $w_1$ is a unitary from $\ker(x)$ to ${\range(x)}^\perp$, and 
$v_1,v_2$ are unitaries from $\ker(x)^\perp$ to $\overline{\range(x)}$, it follows that if $p_1\neq 0$, then $p_1x_1$ and $p_1x_2$ are unitaries in $p_1\cM$. Similarly, if $p_2\neq 0$, then $p_2x_1$ and  $p_2x_2$ are non-unitary isometries in $p_2\cM$, and if $p_3 \neq 0$, then $p_3x_1$ and  $p_3x_2$ are non-unitary coisometries in $p_3\cM$. This completes the proof.
 \end{proof}

Now we prove Theorem \ref{t:A}. 
  \begin{proof}
     Let $x\in \cM_1$ be a $C^*$-extreme point of $\cM_1$. 
     By  Theorem \ref{t:comp-thm-on-contr}, there exist partial isometries  $x_1$ and  $x_2$, defined as  in the proof of Theorem \ref{t:comp-thm-on-contr} such that
     $x=\frac{x_1+x_2}{2}$.
     Since $x$ is $C^*$-extreme, it follows that $x$ is unitarily equivalent to $x_1$ and $x_2$. That is, there exist unitaries $u_1, u_2 \in \cM$ such that
     \[
     x=u_1^*x_1u_1, ~~~\text{and}~~~ x=u_2^*x_2u_2.
     \]
     Again, using Theorem \ref{t:comp-thm-on-contr}, there exist central projections $p_1,p_2,p_3 \in \cM$ satisfying $p_1+p_2+p_3=\bold{1}$ and the conditions (\ref{i1}),(\ref{i2}) and (\ref{i3}) hold.
      Now, 
      $p_1x=p_1u_1^*x_1u_1=p_1u_1^*p_1x_1p_1u_1$. 
      From Condition (\ref{i1}) (Theorem \ref{t:comp-thm-on-contr}),
      one has either $p_1=0$, or  $p_1x_1$ is a unitary in $p_1\cM$. Also, if $p_1 \neq 0$, then $p_1u_1$ is a unitary in $p_1\cM$. Thus, we have either $p_1=0$ or $p_1x$ is a unitary in $p_1\cM$. 
      Similarly, one can show that either $p_2=0$, or $p_2x$ is a non-unitary isometry in $p_2\cM$, 
      and either $p_3=0$, or $p_3x$ is a non-unitary coisometry in $p_3\cM$. Therefore, we have the required central projections that satisfy conditions (\ref{i1:u}), (\ref{i2:iso}) and (\ref{i3:coiso}). 

      Conversely, if there exist central projections $p_1,p_2,p_3 \in \cM$ such that  $p_1+p_2+p_3=\bold{1}$ and the conditions (\ref{i1:u}), (\ref{i2:iso}) and (\ref{i3:coiso}) hold. 
      Then, by Theorem \ref{t:iso-imp-C*ext-in-vN}, we have each $p_ix$ is a $C^*$-extreme point of
      the closed unit ball of 
      $(p_i\cM)_1$ for $i=1,2,3$. On applying Theorem \ref{t:componentwise C*ext}, we obtain $x=p_1x+p_2x+p_3x$ is a $C^*$-extreme point of $\cM_1$.  This completes the proof.
      \end{proof}

Similar to the case in 
$B(H)$ where every contraction is an average of an isometry and a coisometry, as an application of Theorem \ref{t:A},  we have an analogous result for von Neumann algebras.
 \begin{corollary}
     \label{c:KMT}
     Let $x\in \cM_1$. There exist $C^*$-extreme points $x_1$ and $x_2$ of $\cM_1$ such that $x=\frac{x_1+x_2}{2}$.
 \end{corollary}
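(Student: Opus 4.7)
The plan is to combine the structural decomposition from Theorem \ref{t:comp-thm-on-contr} with the characterization of $C^*$-extreme points in Theorem \ref{t:A}. In effect, the hard work has already been done: Theorem \ref{t:comp-thm-on-contr} produces an averaging $x = \frac{x_1+x_2}{2}$ of two partial isometries $x_1, x_2 \in \cM$ together with central projections $p_1, p_2, p_3$ summing to $\bold{1}$ such that, for each $j \in \{1,2,3\}$ with $p_j \neq 0$, both $p_j x_1$ and $p_j x_2$ are simultaneously a unitary (for $j=1$), a non-unitary isometry (for $j=2$), or a non-unitary coisometry (for $j=3$) in $p_j \cM$.

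First I would invoke Theorem \ref{t:comp-thm-on-contr} on the given $x \in \cM_1$ to obtain the partial isometries $x_1, x_2 \in \cM$ and central projections $p_1, p_2, p_3 \in \cM$ with $p_1+p_2+p_3=\bold{1}$ and with properties (i)--(iii) of that theorem. This already gives us $x = \frac{x_1+x_2}{2}$.

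Next I would check that $x_1$ satisfies the characterization in Theorem \ref{t:A}: the same central projections $p_1, p_2, p_3$ witness that $p_1 x_1$ is a unitary in $p_1\cM$ (whenever $p_1 \neq 0$), $p_2 x_1$ is a non-unitary isometry in $p_2\cM$ (whenever $p_2 \neq 0$), and $p_3 x_1$ is a non-unitary coisometry in $p_3\cM$ (whenever $p_3 \neq 0$). Hence Theorem \ref{t:A} applies and shows $x_1$ is a $C^*$-extreme point of $\cM_1$. The same argument applied verbatim to $x_2$ shows that $x_2$ is also a $C^*$-extreme point of $\cM_1$, completing the proof.

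There is really no obstacle here, as this corollary is essentially a repackaging of Theorem \ref{t:comp-thm-on-contr} through the lens of Theorem \ref{t:A}; the only thing to notice is that the conditions appearing in Theorem \ref{t:comp-thm-on-contr}(i)--(iii), phrased for the averaging partial isometries $x_1, x_2$, are precisely the conditions (\ref{i1:u})--(\ref{i3:coiso}) in Theorem \ref{t:A} that characterize $C^*$-extremality in $\cM_1$.
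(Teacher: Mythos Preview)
Your proposal is correct and follows essentially the same approach as the paper: invoke Theorem~\ref{t:comp-thm-on-contr} to produce $x_1, x_2$ and the central projections $p_1, p_2, p_3$, then observe that these same projections verify the hypotheses of Theorem~\ref{t:A} for each $x_i$. The only cosmetic difference is that the paper treats $x=0$ separately (taking $x_1=\bold{1}$, $x_2=-\bold{1}$), but this case is already covered by the statement of Theorem~\ref{t:comp-thm-on-contr} as you use it.
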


 \begin{proof}
 If $x = 0$, then take $x_1 = \mathbf{1}$ and $x_2 = -\mathbf{1}$. Thus, we have 
  $x = \frac{x_1 + x_2}{2}$.
Now suppose $x \neq 0$. Define $x_1$ and $x_2$ as in Equation~\ref{eq:C*-ext}. By Theorem~\ref{t:comp-thm-on-contr}, both $x_1$ and $x_2$ satisfy conditions~(\ref{i1}), (\ref{i2}), and (\ref{i3}), and moreover 
  $x = \frac{x_1 + x_2}{2}$.
From Theorem~\ref{t:A}, it follows that $x_1$ and $x_2$ are $C^*$-extreme points of $\cM_1$. This completes the proof.
 \end{proof}

As another application of Theorem~\ref{t:A}, we obtain the following theorem, which characterizes certain von Neumann algebras in terms of their $C^*$-extreme points.

 \begin{theorem}
    \label{t:C*ext-in-subclass}
    Let $\cM \subset B(H)$ be a von Neumann algebra. Then,  the only
    $C^*$-extreme points of $\cM_1$ are isometries and coisometries
    if and only if  
        $\cM$ is one of the following:
        \begin{enumerate} [(i)]
            \item a finite von Neumann algebra,
            \item a properly infinite factor, or
            \item a direct sum of a finite von Neumann algebra and a properly infinite factor. 
        \end{enumerate}
\end{theorem}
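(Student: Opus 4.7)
The proof splits naturally into two implications; I would attack both using Theorem~\ref{t:A}, the componentwise characterization of Theorem~\ref{t:componentwise C*ext}, and the standard central decomposition $\cM = \cM_f \oplus \cM_{pi}$ of a von Neumann algebra into its finite part and properly infinite part (see \cite{KR2}).

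For the forward direction I would do a case analysis on $\cM$. If $\cM$ is finite, then every isometry in $\cM$ is already a unitary, so applying Theorem~\ref{t:A} to any $C^*$-extreme $x$ forces the projections $p_2$ and $p_3$ there to vanish (otherwise one would produce a non-unitary isometry or coisometry inside the finite algebra $p_i\cM$); hence $x = p_1 x$ is unitary, and in particular both an isometry and a coisometry. If $\cM$ is a properly infinite factor, the center is trivial so exactly one of $p_1, p_2, p_3$ equals $\mathbf{1}$, forcing $x$ itself to be a unitary, a non-unitary isometry, or a non-unitary coisometry. Case (iii) then drops out of Theorem~\ref{t:componentwise C*ext}: the coordinates of a $C^*$-extreme $x = (a,b)$ are $C^*$-extreme in the finite and properly-infinite-factor summands respectively, so $a$ is unitary while $b$ is a unitary, a non-unitary isometry, or a non-unitary coisometry, and each of the three resulting pairs is an isometry or a coisometry in the direct sum.

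For the converse I would argue contrapositively. If $\cM$ is not of any of the three stated forms, the central projection $z$ carving out $\cM_{pi} = z\cM$ must be nonzero and $z\cM$ must fail to be a factor, so there exists a central projection $q \in \cM$ with $0 < q < z$. Proper infiniteness passes to nonzero central corners, so both $q\cM$ and $(z-q)\cM$ are properly infinite, and I may choose a non-unitary isometry $v_1 \in q\cM$ with $v_1^* v_1 = q$ and $v_1 v_1^* < q$, together with a non-unitary coisometry $v_2 \in (z-q)\cM$ with $v_2 v_2^* = z-q$ and $v_2^* v_2 < z-q$. Setting $x := (\mathbf{1}-z) + v_1 + v_2$, the central projections $p_1 = \mathbf{1}-z$, $p_2 = q$, $p_3 = z-q$ verify the three alternatives of Theorem~\ref{t:A} on the nose, so $x$ is a $C^*$-extreme point of $\cM_1$. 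Using centrality of $q$ and $z$ to kill all cross terms, one gets $x^* x = (\mathbf{1}-z) + q + v_2^* v_2 < \mathbf{1}$ and $x x^* = (\mathbf{1}-z) + v_1 v_1^* + (z-q) < \mathbf{1}$, so $x$ is neither an isometry nor a coisometry, contradicting the hypothesis. The main obstacle is purely bookkeeping: I must place the witnesses $v_1, v_2$ in the correct central corners so that the converse of Theorem~\ref{t:A} applies directly and the cross terms in $x^*x, xx^*$ all vanish; beyond that, no tool is needed that is not already established in the paper.
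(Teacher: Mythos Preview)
Your proposal is correct and follows essentially the same approach as the paper: both directions hinge on the finite/properly-infinite central decomposition together with Theorem~\ref{t:A}, and the counterexample is built by placing a non-unitary isometry and a non-unitary coisometry in orthogonal central corners of the properly infinite part. The only differences are cosmetic---you certify $C^*$-extremality of your $x$ via Theorem~\ref{t:A} directly where the paper routes through Theorems~\ref{t:iso-imp-C*ext-in-vN} and~\ref{t:componentwise C*ext}, and you handle case~(iii) by the componentwise reduction of Theorem~\ref{t:componentwise C*ext} rather than the paper's explicit computation with the projections $p_i r$.
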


\begin{proof}
First, assume that  the only $C^*$-extreme points of $\cM_1$ are isometries and coisometries. If $\cM$ is a finite von Neumann algebra or a properly infinite factor, then there is nothing to prove. Now,
 we consider the case 
 $\cM= \cN \oplus \cR$ ($\ell_\infty$-sum)
 such that $\cN$ is a finite von Neumann algebra and
  $\cR$ is a properly infinite von Neumann algebra. If $\cR$ is a properly infinite factor, we are done. 
  So, if possible, assume $\cR$ is not a factor.
  Then, there exists a central projection $p\in \cR$ such that $p \neq 0,\bold{1}_{\cR}$, where $\bold{1}_{\cR}$ denote the identity of von Neumann algebra $\cR$. Thus $\bold{1}_{\cR}-p \neq 0$. Observe that $\cR= p \cR \oplus (\bold{1}_{\cR}-p)\cR$ ($\ell_\infty$-sum). In this case, we can choose an isometry $v\in p \cR$ and a coisometry $w\in (\bold{1}_{\cR}-p)\cR$ such that neither $v$ nor $w$ is a unitary, then using Theorem \ref{t:iso-imp-C*ext-in-vN} and Theorem \ref{t:componentwise C*ext}, we have $u=(v,w)$ is a $C^*$-extreme point of $\cR_1$ and hence $(\bold{1}_{\cN}, u)$ is a $C^*$-extreme point of $\cM_1$, where $\bold{1}_{\cN}$ denote the identity of von Neumann algebra $\cN$. But note that $(\bold{1}_{\cN}, u)$ is neither an isometry nor a coisometry in $\cM$, which is a contradiction. 
 
Conversely,  let $r \in \cM$ be a central projection such that either $r=0$ or $r\cM$ is a finite von Neumann algebra and $\bold{1}-r=0$ or
 $(\bold{1}-r)\cM$ is a properly infinite factor. Note that $\cM= r\cM \oplus (\bold{1}-r)\cM$ ($\ell_\infty$-sum). 
 Let $x \in \cM_1$ be a $C^*$-extreme point of $\cM_1$.
 Then by Theorem \ref{t:A}, there exist central projections $p_1,p_2,p_3 \in \cM$ such that $p_1+p_2+p_3=\bold{1}$ and conditions (\ref{i1:u}), (\ref{i2:iso}), and (\ref{i3:coiso}) holds. If $\mathbf{1} - r = 0$, then $\mathcal{M}$ is a finite von Neumann algebra. In this case, by conditions (\ref{i1:u}), (\ref{i2:iso}), and (\ref{i3:coiso}), we have that $x$ is a unitary in $\mathcal{M}$. 
If $r=0$, then $\cM$ is a properly infinite factor. Hence one of the $p_i$'s is $\bold{1}$.
Thus, again by conditions (\ref{i1:u}), (\ref{i2:iso}), and (\ref{i3:coiso}), we get $x$ is either isometry or coisometry. 

Lastly, we consider the case when $r\neq 0$ and $\bold{1}-r \neq 0$. More precisely, $\cM$ is a direct sum of a finite von Neumann algebra $r\cM$ and a properly infinite factor $(\bold{1}-r)\cM$. By condition (\ref{i2:iso}), we have either $p_2=0$ or $p_2x$ is an isometry in $p_2\cM$. Since $r$ is a central projection, it follows that $p_2rx$ is an isometry in $p_2r\cM$. Observe that $p_2r\cM$ is a finite von Neumann algebra because $r\cM$ is so. Thus, either $p_2=0$ or $p_2rx$ is a unitary in $p_2r\cM$. Similarly, by condition (\ref{i3:coiso}), either $p_3=0$ or $p_3rx$ is a unitary in $p_3r\cM$. Define $q=p_1+p_2r+p_3r$. Then $p_1x+p_2rx+p_3rx$ is unitary in $q\cM$. Now note that $p_2(\mathbf{1} - r)$ and $p_3(\mathbf{1} - r)$ are mutually orthogonal central projections in $(\mathbf{1} - r)\cM$ and $(\mathbf{1} - r)\cM$ is a properly infinite factor. So either $p_2(\mathbf{1} - r)=0$ or $p_3(\mathbf{1} - r)=0$. If $p_2(\mathbf{1} - r)=0$, then $q+p_3(\mathbf{1} - r)=\bold{1}$. 
Now, if $p_3\neq 0$, then by condition (\ref{i3:coiso}), $p_3(\mathbf{1} - r)x$ is a coisometry in $p_3(\mathbf{1} - r)\cM$. Because $qx$ is unitary in $q\cM$ and $p_3(\mathbf{1} - r)x$ is a coisometry in $p_3(\mathbf{1} - r)\cM$, we have $x=qx+p_3(\mathbf{1} - r)x$ is coisometry in $\cM$. Similarly, when $p_3(\mathbf{1} - r)=0$, one has $x=qx+p_2(\mathbf{1} - r)x$ is an isometry in $\cM$. This completes the proof.
\end{proof}

 Now we prove Theorem \ref{t:B}.
\begin{proof}
 $(i) \implies (ii)$:   Let $x$ be a $C^*$-extreme point of $\cM_1$. By Theorem \ref{t:A}, we have $x$ is a partial isometry. Using Theorem \ref{t:lin-ext}, it remains to verify that $x$ satisfies Equation \ref{eq:ext-cond}. To that end, let $e=x^*x$ and $f=xx^*$. Then using
  conditions (\ref{i1:u}), (\ref{i2:iso}), and (\ref{i3:coiso}) from Theorem \ref{t:A}, we obtain that 
 \begin{align*}
     \text{if}~~ p_1\neq 0,~~ \text{then}~~ p_1e=p_1 ~~\text{and}~~ p_1f=p_1,\\
      \text{if}~~ p_2\neq 0,~~ \text{then}~~ p_2e=p_2 ~~\text{and}~~ p_2f\neq p_2,\\
    \text{if}~~ p_3\neq 0,~~ \text{then}~~ p_3e \neq p_3 ~~\text{and}~~ p_3f=p_3.
 \end{align*}

 Thus, 
 \[
 (\bold{1}-e)= (\bold{1}-e)p_1+(\bold{1}-e)p_2+(\bold{1}-e)p_3= (\bold{1}-e)p_3.
 \]
 Similarly,
 \[
 (\bold{1}-f)= (\bold{1}-f)p_1+(\bold{1}-f)p_2+(\bold{1}-f)p_3= (\bold{1}-f)p_2.
 \]
 Fix $y\in \cM$. Using the fact that the central projections 
 $p_i$'s are mutually orthogonal, we get
 \begin{align*}
     (\bold{1}-e)y(\bold{1}-f)= (\bold{1}-e)p_3y(\bold{1}-f)p_2=0.
 \end{align*}
 Hence, $x$ is a linear extreme point of $\cM_1$.

 $(ii) \implies(i)$: Since $x$ is a linear extreme point of $\cM_1$. By Theorem \ref{t:lin-ext}, $x$ is a partial isometry, and it satisfies Equation \ref{eq:ext-cond}.
 Let $e=x^*x$ and $f=xx^*$. Let $p_3$ denote the central support of $(\bold{1}-e)$ and $p_2$ denote the central support of $(\bold{1}-f)$. 

 \textbf{Claim:} \textbf{$p_2p_3=0$. }

 Assume for contradiction that $p:=p_2p_3\neq 0$. Then $p$ is a nonzero central projection. Because $p\leq p_3$ and $p\neq 0$, we have $p(\bold{1}-e)\neq 0$. Similarly, since $p\leq p_2$ and $p\neq 0$, one has $p(\bold{1}-f)\neq 0$. 
 It is well known that in a von Neumann algebra $\cN$, if $r,s \in \cN$ are nonzero projections, then $r\cN s\neq \{0\}$. Applying this to the von Neumann algebra $\cN=p\cM p$ with $r=p(\bold{1}-e)$ ans $s=p(\bold{1}-f)$, we get
 \[
 (p(\bold{1}-e)) (p\cM p) (p(\bold{1}-f))\neq \{0\}.
 \]

 Since $p$ is a central projection, it follows that $(p(\bold{1}-e)) (p\cM p) (p(\bold{1}-f))=p(\bold{1}-e)\cM (\bold{1}-f)p$. Therefore, $p(\bold{1}-e)\cM (\bold{1}-f)p\neq \{0\}$. Hence $(\bold{1}-e)\cM (\bold{1}-f) \neq \{0\}$, which is a contradiction of Equation \ref{eq:ext-cond}. Therefore, $p_2p_3=0$.

 Define $p_1=\bold{1}-p_2-p_3$. Then, $p_1,p_2,p_3\in \cM$  are central projections satisfying $p_1+p_2+p_3=\bold{1}$. Now, we show $p_1,p_2,p_3$ 
 satisfies the conditions (\ref{i1:u}), (\ref{i2:iso}) and (\ref{i3:coiso}). Because $\bold{1}-e\leq p_3$, we have $p_2(\bold{1}-e)\leq p_2p_3=0$. Thus $p_2(\bold{1}-e)=0$, that is, $p_2e=p_2$. Then $(p_2 x)^* (p_2 x)= p_2x^*x=p_2e=p_2$. Hence, $p_2x$ is an isometry in $p_2\cM$ unless $p_2=0$. It is non-unitary when $p_2\neq 0$; because $p_2(\bold{1}-f)\neq 0$ implies $p_2f \neq f$. But  $(p_2x)(p_2x)^*=p_2xx^*=p_2 f\neq f$. Therefore, it cannot be unitary in $p_2\mathcal {M}$. Similarly, one can show that if $p_3\neq 0$, then  $p_3x$ is a non-unitary coisometry in $p_3\cM$.  Now, we show that $p_1x$ is a unitary in $p_1\cM$, unless $p_1\neq 0$. Because $p_1$ and $p_3$ are mutually orthogonal projections and $(\bold{1}-e)\leq p_3$ implies $p_1(\bold{1}-e)\leq p_1p_3=0$. Thus $p_1(\bold{1}-e)=0$, that is, $p_1e=p_1$. Likewise, since $p_1$ and $p_2$ are mutually orthogonal projections and $(\bold{1}-f)\leq p_2$, we get $p_1f=p_1$. Therefore, $(p_1x)^*(p_1x)=p_1e=p_1$ and $(p_1x) (p_1x)^*=p_1f=p_1$. Thus, if $p_1\neq 0$, one has $p_1x$ is a unitary in $p_1\cM$. Consequently, $x$ is a $C^*$-extreme point of $\cM_1$.

 $(i) \implies(iii)$: Since $x$ is a $C^*$-extreme point of $\cM_1$, by Theorem \ref{t:A}, there exist central projections $p_1,p_2,p_3 \in \cM$ such that  conditions (\ref{i1:u}), (\ref{i2:iso}) and (\ref{i3:coiso}) hold. Now applying Corollary \ref{c:st.ext-vN} and Proposition \ref{p:st.ext-in-d.sum}, one has $x$ is a strongly extreme point of $\cM_1$. 

$(iii) \implies(ii)$: Follows immediately from the fact 
that
every strongly extreme point is a linear extreme point.
This completes the proof.
 \end{proof}

 \begin{remark}
    \label{rem:significance}
    Note that the classification scheme of Theorem~ \ref{t:C*ext-in-subclass} now identifies von Neumann algebras for which linear extreme points of the closed unit ball are precisely isometries and coisometries.
\end{remark}

 As a consequence of Theorem \ref{t:B}, we have the following results.

 \begin{corollary}
     \label{c:bidual}
     Let $x\in \cA_1$ be a linear extreme point of $\cA_1$. then $x$ is a $C^*$-extreme point of $\cA_1^{**}$.
 \end{corollary}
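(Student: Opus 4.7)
The proof should be a direct combination of two results already established in the paper. The plan is to observe that $\cA^{**}$ is itself a von Neumann algebra, so the equivalence of linear extremality and $C^*$-extremality in the von Neumann setting (Theorem~\ref{t:B}) applies with $\cM = \cA^{**}$.

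First, I would invoke Theorem~\ref{t:ext-in-bidual}: since $x$ is a linear extreme point of $\cA_1$, it remains a linear extreme point of $\cA_1^{**}$, which is the closed unit ball of the bidual von Neumann algebra $\cA^{**}$. Next, I would apply Theorem~\ref{t:B} to the von Neumann algebra $\cA^{**}$: within $(\cA^{**})_1 = \cA_1^{**}$, being a linear extreme point is equivalent to being a $C^*$-extreme point. Combining the two, $x$ is a $C^*$-extreme point of $\cA_1^{**}$.

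There is essentially no obstacle here, since both ingredients have already been fully developed. The only point worth stating explicitly is that $\cA^{**}$, being a von Neumann algebra, falls under the hypothesis of Theorem~\ref{t:B}, so the equivalence between the three notions of extremality is available at the bidual level even though $\cA$ itself is only assumed to be a unital $C^*$-algebra. This also shows that iterating the argument yields $C^*$-extremality in all even-order duals of $\cA$, echoing the ``in particular'' clause of Theorem~\ref{t:B}.
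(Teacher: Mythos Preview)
Your proposal is correct and matches the paper's own proof essentially verbatim: invoke Theorem~\ref{t:ext-in-bidual} to pass linear extremality to $\cA_1^{**}$, then apply Theorem~\ref{t:B} with $\cM=\cA^{**}$ to upgrade to $C^*$-extremality. There is nothing to add.
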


 \begin{proof}
     From Theorem \ref{t:ext-in-bidual}, we know that $x$ is a linear extreme point of $\cA_1^{**}$. Using the fact that $\cA^{**}$ is a von Neumann algebra and applying Theorem \ref{t:B}, we conclude that $x$ is a $C^*$-extreme point of $\cA_1^{**}$. This completes the proof.
 \end{proof}

  \begin{proposition}
       \label{c:vN}
Let $\cM$ be a von Neumann algebra.
Let the set of isolated points $S \subset \Omega$ be dense in $\Omega$ and $f$  be a $C^*$-extreme point of $C(\Omega,\cM)_1$. Then $f(\omega)$ be a $C^*$-extreme point of $\cM_1$ for all $\omega\in \Omega$.
 \end{proposition}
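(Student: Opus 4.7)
The plan is to combine Corollary~\ref{c:C_extonisopt}, which already handles isolated points, with the density hypothesis and the equivalence between $C^*$-extremality and linear extremality established in Theorem~\ref{t:B}. The role of the von Neumann algebra hypothesis enters precisely through Theorem~\ref{t:B}, which does not hold for a general $C^*$-algebra; meanwhile, density of the isolated points is what propagates the property from $S$ to all of $\Omega$.

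First I would note that for every isolated point $\omega_0 \in S$, Corollary~\ref{c:C_extonisopt} gives that $f(\omega_0)$ is a $C^*$-extreme point of $\cM_1$. Since $\cM$ is a von Neumann algebra, Theorem~\ref{t:B} then yields that $f(\omega_0)$ is a linear extreme point of $\cM_1$. So the set $\{f(\omega_0): \omega_0 \in S\}$ consists entirely of linear extreme points of $\cM_1$.

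Next, fix an arbitrary $\omega \in \Omega$. Using density of $S$ in $\Omega$, pick a net $\{\omega_\alpha\} \subset S$ with $\omega_\alpha \to \omega$. Since $f: \Omega \to \cM$ is norm-continuous, $f(\omega_\alpha) \to f(\omega)$ in the norm topology of $\cM$. Each $f(\omega_\alpha)$ is a linear extreme point of $\cM_1$, and by Remark~\ref{rem:closed} the set of linear extreme points is norm closed. Hence $f(\omega)$ is a linear extreme point of $\cM_1$. Applying Theorem~\ref{t:B} once more, we conclude that $f(\omega)$ is a $C^*$-extreme point of $\cM_1$, as required.

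There is no real obstacle once the machinery of Section~\ref{sec:main result} is in place; the key subtlety worth flagging in the write-up is why the von Neumann hypothesis is essential in both directions: going from $C^*$-extreme to linear extreme (so the norm-closedness argument from Remark~\ref{rem:closed} can be used) and back from linear extreme to $C^*$-extreme at the limit point $\omega$. Without Theorem~\ref{t:B} one would be forced to argue directly that $C^*$-extremality passes to norm limits in $\cM_1$, which is not obviously true for a general unital $C^*$-algebra.
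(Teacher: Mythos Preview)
Your proof is correct and follows essentially the same route as the paper: use Corollary~\ref{c:C_extonisopt} at isolated points, pass to a limit via a net from $S$, and invoke Theorem~\ref{t:B} together with Remark~\ref{rem:closed} to conclude that $C^*$-extremality (equivalently, linear extremality) is preserved under norm limits. The only cosmetic difference is that the paper phrases the closure step as ``the set of $C^*$-extreme points of $\cM_1$ is closed'' in one breath, while you explicitly translate to linear extreme points and back; the content is identical.
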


\begin{proof}
    Let $f$  be a $C^*$-extreme point of $C(\Omega,\cM)_1$. Then, by Corollary \ref{c:C_extonisopt}, we have $f(\omega)$ is a $C^*$-extreme point of $\cM_1$ for all $\omega\in S$.
 Now, take $\omega\in \Omega\setminus S$.  By assumption, there exists a net $\{\omega_\alpha\}\subset S$ of isolated points such that $\omega_\alpha \rightarrow \omega$.
  Since $f$ is continuous, it follows that $f(\omega_\alpha) \rightarrow f(\omega)$. 
   Applying Theorem \ref{t:B} and Remark \ref{rem:closed}, one has that the set of $C^*$-extreme points of $\cM_1$ is closed. 
  Because $f(\omega_\alpha)$ is a $C^*$-extreme point of $\cM_1$ for all $\alpha$,
  we conclude that
   $f(\omega)$ is a $C^*$-extreme point of $\cM_1$. This completes the proof.
\end{proof}

It was pointed out in \cite[Example~4.4]{BJR} that even for a finite-dimensional Banach space $E$, the pointwise geometric behaviour of a function $f \in C(\Omega, E)$ need not be reflected in the vector-valued case. In contrast, for the case of $C(\Omega, \mathcal M)$, where $\mathcal M$ is a von Neumann algebra, we have the following result.

\begin{proposition}
     \label{p:st.ext-in-cts}
     Let  $f\in C(\Omega, \cM)_1$ be such that $f(\omega)$ is a $C^*$-extreme point of $\cM_1$ for all $\omega \in \Omega$. Then $f$ is a strongly extreme point of $C(\Omega, \cM)_1$.
 \end{proposition}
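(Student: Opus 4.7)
The plan is to combine Theorem~\ref{t:B} with the pointwise characterization of strongly extreme points in $C(\Omega, X)$ due to Dutta, Hardtke, and Sensarma (cited from \cite{DHS} in the introduction). The hypothesis gives that $f(\omega)$ is a $C^*$-extreme point of $\cM_1$ for every $\omega \in \Omega$, and the target conclusion is a statement about strong extremality of $f$ in the vector-valued function algebra. The whole task is therefore to bridge these two notions at each point, and then pass to a global statement.

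The first step is pointwise: for each fixed $\omega \in \Omega$, the element $f(\omega)$ lies in $\cM_1$ and is $C^*$-extreme. Since $\cM$ is a von Neumann algebra, Theorem~\ref{t:B} (the implication $(i) \Rightarrow (iii)$) applies and tells us that $f(\omega)$ is in fact a strongly extreme point of $\cM_1$. This gives us the hypothesis of the $C(\Omega, X)$ characterization, with $X = \cM$ and the subspace structure provided by the natural embedding of $\cM$ into its own closed unit ball.

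The second step is the lift from the pointwise statement to the global one. Here we invoke the result recalled in the introduction from \cite{DHS}: for a compact Hausdorff space $\Omega$ and a Banach space $X$, an element $f \in C(\Omega, X)_1$ is a strongly extreme point if and only if $f(\omega)$ is a strongly extreme point of $X_1$ for every $\omega \in \Omega$. Applying this to $X = \cM$ together with Step~1 immediately yields that $f$ is a strongly extreme point of $C(\Omega, \cM)_1$, completing the proof.

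There is no genuine obstacle in this argument; the only conceptual work has already been done in Theorem~\ref{t:B}, which identifies $C^*$-extremality with strong extremality in the von Neumann algebra setting. Once that identification is available pointwise, the DHS characterization delivers the conclusion with essentially no further work. One could note, as a remark, that the converse direction (strongly extreme implies pointwise $C^*$-extreme on isolated points) is precisely the content of Corollary~\ref{c:C_extonisopt} combined with Theorem~\ref{t:B}, so this proposition is the natural geometric counterpart to Proposition~\ref{c:vN}.
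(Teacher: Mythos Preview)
Your proof is correct, and it shares its first step with the paper's: by Theorem~\ref{t:B}, each $f(\omega)$ is a strongly extreme point of $\cM_1$. The second step differs. You invoke the \cite{DHS} characterization (pointwise strongly extreme $\Leftrightarrow$ globally strongly extreme in $C(\Omega,X)_1$) directly, which is quick and uses a result already quoted in the introduction. The paper instead argues that pointwise (linear) extremality forces $f$ to be a linear extreme point of $C(\Omega,\cM)_1$, and then applies the ``in particular'' clause of Theorem~\ref{t:B} --- linear extreme equals strongly extreme in any $C^*$-algebra --- to the $C^*$-algebra $C(\Omega,\cM)$ itself. Your route is slightly more direct; the paper's has the virtue of being self-contained, relying only on results developed within the paper rather than on the external \cite{DHS} theorem. (A minor attribution slip: \cite{DHS} is Dowling, Hu, and Smith, not Dutta, Hardtke, and Sensarma.)
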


 \begin{proof}
     Since $f(\omega)$ is a $C^*$-extreme point for all $\omega \in \Omega$, by Theorem \ref{t:B}, it follows that $f(\omega)$ is a strongly extreme point of $\cM_1$. 
     Therefore, $f$ is a linear extreme point of $C(\Omega,{\mathcal M})_1$. Now using the fact that $C(\Omega,{\mathcal M})$ is a $C^\ast$-algebra and Theorem \ref{t:B}, we conclude that $f$ is a strongly extreme point of $C(\Omega, \cM)_1$.
 \end{proof}

\vspace{.3cm}
 \textbf{Open Questions:}
 
 \textbf{Question~~1:} We do not know whether an analogue of Theorem~\ref{t:ideal} holds in the $C^*$-extreme case. 

 \textbf{Question~~2:} For a given $C^*$-algebra $\mathcal{A}$ and $f \in C(\Omega, \mathcal{A})_1$, does $f$ being $C^*$-extreme imply that $f(\omega)$ is $C^*$-extreme for all $\omega \in \Omega$?

 \textbf{Question~~3:} We do not know if $x$ is a $C^*$-extreme point of $\cA_1$ then whether it implies $x$ is also a $C^*$-extreme point of $\cA_1^{**}$.

\vspace{.3cm}
\textbf{Acknowledgements:} This work is part of the project ``Classification of Banach spaces using differentiability",
funded by the Anusandhan National Research Foundation (ANRF), Core Research Grant, CRG2023-000595.
The first author is a postdoctoral fellow in this project. The authors thank 
Mr. Chinmay Ajay Tamhankar (IITM) for his comments/suggestions on Theorem 4.8.

\end{document}